\documentclass[12pt,reqno]{amsproc}

\usepackage[margin=.9in]{geometry}

\usepackage[T1]{fontenc}
\usepackage{bbm}
\usepackage[usenames]{color}
\usepackage{mathpazo}
\usepackage{amsmath,amssymb,amsthm}
\usepackage{wrapfig}
\usepackage{tikz-cd}

\usepackage{caption}
\usepackage{mathrsfs,bm}
\usepackage[all,cmtip]{xy}

\definecolor{PineGreen}{rgb}{0.0,0.47,0.44}
\definecolor{MidnightBlue}{rgb}{0.1,0.1,0.44}
\definecolor{magenta}{rgb}{1.0,0.0,1.0}
\definecolor{bl1}{HTML}{4479A1}
\definecolor{pur1}{HTML}{52196D}
\definecolor{mag1}{HTML}{2AD0F1}
\definecolor{org1}{rgb}{.92,.39.21}
\definecolor{pur2}{rgb}{.53,.47,.7}

\usepackage{hyperref}
\hypersetup{
	colorlinks=true,
	linkcolor=blue,
	citecolor=blue,
	filecolor=magenta,
	urlcolor=blue
}

\makeatletter
\newcommand{\eqnum}{\refstepcounter{equation}\textup{\tagform@{\theequation}}}
\makeatother

\newtheorem{theorem}{Theorem}
\numberwithin{theorem}{section}
\newtheorem{proposition}[theorem]{Proposition}
\newtheorem*{theorem*}{Theorem}
\newtheorem{lemma}[theorem]{Lemma}

\theoremstyle{definition}

\newtheorem{definition}[theorem]{Definition}

\theoremstyle{remark}
\newtheorem{remark}[theorem]{Remark}

\newcommand{\RR}{\mathbb{R}}

\newcommand{\PP}{\mathbb{P}}
\newcommand{\pp}{\mathbb{P}}
\newcommand{\CC}{\mathbb{C}}
\newcommand{\ZZ}{\mathbb{Z}}
\newcommand{\BB}{\mathbf{B}}

\DeclareMathOperator{\HG}{\mathrm{{H}}}
\newcommand{\cO}{\mathscr{O}}
\newcommand{\cM}{\mathscr{M}}

% links
\newcommand{\Lk}{\mathscr{L}}
\newcommand{\Clk}{\Lk}

\newcommand{\LC}{\bm{\Lambda}}
\newcommand{\e}{\mathbf{e}}

% sheaves

% args
\newcommand{\set}[1]{{\left\{{#1}\right\}}}
\newcommand{\ip}[1]{\left\langle{#1}\right\rangle}

%spaces
\newcommand{\bW}{\mathbf{W}}
\newcommand{\bN}{\mathbf{N}}

\newcommand{\inj}{\hookrightarrow}

\usepackage[foot]{amsaddr}

\begin{document}
	
	\title{Complex Links and Hilbert-Samuel Multiplicities}
	\author{Martin Helmer} 
	\address[MH]{
		Mathematical Sciences Institute,
		The Australian National University,
		Canberra, Australia}\email{martin.helmer@anu.edu.au}
	\author{Vidit Nanda}\address[VN]{Mathematical Institute,
		University of Oxford, Oxford, United Kingdom}\email{nanda@maths.ox.ac.uk}
	
	\begin{abstract} 
		We describe a framework for estimating Hilbert-Samuel multiplicities $\e_XY$ for pairs of projective varieties $X \subset Y$ from finite point samples rather than defining equations. The first step involves proving that this multiplicity remains invariant under certain hyperplane sections which reduce $X$ to a point $p$ and $Y$ to a curve $C$. Next, we establish that $\e_pC$ equals the Euler characteristic (and hence, the cardinality) of the complex link of $p$ in $C$. Finally, we provide explicit bounds on the number of uniform point samples needed (in an annular neighborhood of $p$ in $C$) to determine this Euler characteristic with high confidence.
	\end{abstract}
	\maketitle
	
	\section{Introduction}
	
	One of the most fundamental quantities of interest in intersection theory is the {\bf Hilbert-Samuel multiplicity}, which associates an integer $\e_XY \geq 0$ to each pair consisting of an irreducible subvariety $X$ inside a pure-dimensional scheme $Y$. This integer serves -- among other things -- as a coarse measurement of the singularity type of $X$ inside $Y$. When $Y$ is reduced, $\e_XY = 1$ holds if and only if $X$ is nonempty and smoothly embedded in $Y$. The importance of Hilbert-Samuel multiplicities stems from their wide-ranging connections with several other intersection-theoretic invariants. For instance, $\e_XY$ appears as the coefficient of $[X]$ in the {Segre class} $s(X,Y)$ \cite[Chapter 4.3]{fulton2013intersection}, in  Fulton and MacPherson's intersection product \cite[Chapter 12.3]{fulton2013intersection}, and in Serre's Tor formula \cite[Theorem 1, pg 112]{serre2012local}. Computing $\e_XY$, either directly from its definition or as a consequence of these connections,  requires serious algebraic manipulations of the defining equations for $X$ and $Y$. 
	
	Our goal in this paper is to describe a new framework for estimating $\e_XY$ from  finite local point samples without recourse to any such equations.  In this setting, we have no means to capture the scheme structure of $Y$, and will therefore restrict to the case where $Y$ is reduced, i.e.,~a pure dimensional variety in some $n$-dimensional projective space $\pp^n$. The methods developed here could also be applied to any data set which we would expect to have the structure of a complex variety, even if the variety is not known. Here is an informal version of our main result for estimating $\e_XY$ for such pairs $X \subset Y$.
	
	\begin{theorem*}
		Let $L \subset \pp^n$ be a linear space obtained by intersecting $(\dim Y - 1)$ hyperplanes which are general except for the requirement that they all pass through a generic point $p$ of $X$. The Hilbert-Samuel multiplicity $\e_XY$ can be determined with high confidence from a sufficiently large (but finite) uniform point sample $S$ lying on the curve $Y \cap L$ in a local annular neighbourhood around $p$.
	\end{theorem*}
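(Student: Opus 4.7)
The plan is to follow the three-stage reduction sketched in the abstract. First I would establish the \emph{hyperplane invariance} of the Hilbert-Samuel multiplicity: if $H \subset \pp^n$ is a general hyperplane passing through a generic point $p$ of $X$, then $\e_{X \cap H}(Y \cap H) = \e_X Y$. The mechanism is the theory of superficial elements in commutative algebra: a generic linear form vanishing at $p$ restricts to a superficial element of the maximal ideal of the local ring $\cO_{Y,p}$ relative to the ideal defining $X$, and reduction modulo a superficial element preserves the multiplicity. Iterating this $(\dim Y - 1)$ times leaves $p$ as a zero-dimensional subvariety of the curve $C = Y \cap L$, so that $\e_X Y = \e_p C$. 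A Bertini-type genericity argument is needed to make sure that the successive hyperplane sections remain irreducible of the expected dimension near $p$ and that $p$ remains a generic point at every stage.

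Next I would identify $\e_p C$ with the Euler characteristic of the complex link $\Lk(p,C)$. Since $C$ is a curve, its complex link is a finite set of points, obtained by intersecting $C$ with a generic affine hyperplane slightly offset from $p$ inside a small Euclidean ball. The equality $\e_p C = \chi(\Lk(p,C)) = |\Lk(p,C)|$ is classical for curves: decomposing $C$ locally into its analytic branches, each branch contributes its multiplicity to $\e_p C$, and the same quantity equals the number of points at which that branch meets a sufficiently generic transverse slice.

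The final and heaviest step is the \emph{sampling bound}. Here I would argue that a sufficiently large uniform sample $S$ drawn from an annular neighborhood of $p$ inside $C$ almost surely meets every local branch of $C$ at $p$, and that a Vietoris-Rips or single-linkage clustering of $S$ at an appropriate scale recovers the decomposition of this annulus into its connected components, which are in bijection with the points of $\Lk(p,C)$. To make this quantitative I need two geometric inputs: a lower bound on the induced volume of the thinnest branch inside the annulus, and a lower bound on the Euclidean separation between distinct branches. A concentration inequality for the empirical measure combined with a coupon-collector style argument then yields an explicit sample size for any prescribed confidence level.

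The principal obstacle I anticipate lies in the third step. Different branches of $C$ at $p$ may have wildly different contact orders with the chosen hyperplanes, so the induced volume of the thinnest branch can be very small and the minimum pairwise separation between branches within the annulus can decay rapidly as the annulus shrinks. The required sample complexity will therefore depend on comparatively subtle local invariants of the triple $(Y,X,p)$, and obtaining usable, verifiable lower bounds on both the volume and the separation---probably in terms of the annular radii and the degree of $Y$---is where I expect the most delicate analysis to be concentrated.
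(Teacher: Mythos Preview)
Your first two steps are valid alternatives to the paper's arguments. For Step~1 the paper does not use superficial elements; it invokes Segre-class functoriality under general hyperplane sections together with an explicit degree formula $\e_XY=\deg(Y)\cdot d^{\dim Y-\dim X}-\deg((Y\cap V_{\dim X}\cap L_{\dim X})-X)$ to pass from $\e_pY$ to $\e_p(Y\cap L)$. Your superficial-element route is more classical and arguably more direct, since it works entirely inside the local ring $\cO_{Y,p}$ and avoids intersection theory. For Step~2 the paper gives a Thom-isotopy/relative-homology argument rather than the branch decomposition you sketch; either works, and yours is the more elementary for curves.

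Step~3, however, contains a genuine error. You assert that clustering the sample recovers the connected components of the annular neighbourhood of $p$ in $C$, and that these components are in bijection with the points of $\Lk(p,C)$. They are not: the components of the annulus correspond to the analytic \emph{branches} of $C$ at $p$, whereas $|\Lk(p,C)|=\e_pC$ is the sum of the branch multiplicities. For the cusp $y^2=x^3$ in $\CC^2$ the annulus is connected (a single branch), yet $\e_pC=2$ and the complex link has two points. Your clustering would therefore return $1$, not $2$. The paper avoids this by never attempting to cluster the full annular sample. Instead it retains the offset hyperplane $\pi_\xi^{-1}(\delta)$ as part of the estimator: one first restricts to the subsample $S'=\{y\in S:\mathrm{dist}(y,\pi_\xi^{-1}(\delta))<\alpha\}$ lying in a thin slab about that hyperplane, and then clusters $S'$. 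Density of $S$ in the annulus (via the Niyogi--Smale--Weinberger/Wang--Wang bound, using that the annulus is a smooth $2$-manifold with boundary) guarantees each $x_i\in\Lk(p,C)$ acquires sample points within $\alpha$, and the slab condition together with $\alpha<\mu/4$ (where $\mu$ is the minimum pairwise distance among the $x_i$) keeps the resulting clusters separated. Without the hyperplane in the estimator you cannot distinguish a unibranch singular point from a smooth one.
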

	
	\noindent We provide explicit bounds on how large $S$ must be in terms of the local geometry of $Y \cap L$ near $p$ and the desired probability of successful estimation. Our proof has three basic steps, each involving a different key ingredient and producing an intermediate result. These steps are summarized below. 
	
	\subsection*{Step 1: Algebra} We first establish that $\e_XY$ is invariant under the operation of slicing both $X$ and $Y$ by certain hyperplanes. The key ingredient here is a new {\em degree formula} for Hilbert-Samuel multiplicities \cite[Theorem 5.3]{HH19}. Using this formula, we prove the following result.
	
	\begin{theorem*} [\textbf{A}]
		Given ${X} \subset {Y}$ as above, let $L$ be the intersection of $k$ general hyperplanes which all pass through some general point $p$ of $X$; then,
		\begin{enumerate}
			\item if $k \leq \dim {X}$, then $\e_{{X}}{Y} = \e_{{X} \cap L}({Y} \cap L)$; moreover,
			\item if $k = \dim {X}$, then $\e_{{X}}{Y} = \e_p({Y} \cap L)$; and finally,
			\item if $\dim X < k \leq \dim Y - 1$, then $\e_{{X}}{Y} = \e_p({Y} \cap L)$.
		\end{enumerate}
	\end{theorem*}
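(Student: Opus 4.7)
The argument rests on two distinct hyperplane-section invariance principles, applied inductively. For part~(1) I would proceed by induction on $k$, with the trivial base case $k=0$ and $L = \PP^n$. For the step from $k-1$ to $k$, write $X' = X \cap L_{k-1}$ and $Y' = Y \cap L_{k-1}$; by generality of $p$ and of the hyperplanes chosen so far, $p$ remains a smooth point of $X'$ and the local geometry of $Y'$ near $p$ is preserved. It then suffices to prove
\[
\e_{X'}Y' \;=\; \e_{X'\cap H}(Y'\cap H)
\]
for one further general hyperplane $H$ through $p$, where $X' \cap H$ is understood as the irreducible component through $p$. The key input is the degree formula \cite[Theorem~5.3]{HH19}, which expresses $\e_{X'}Y'$ as a ratio involving $\deg X'$ and the degrees of certain auxiliary projective schemes attached to $(X',Y')$. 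Since intersection with a general hyperplane through a general point preserves the degree of each subvariety of dimension at least one that it does not contain, and Bertini-style genericity guarantees this non-containment for every relevant auxiliary scheme, both the numerator and denominator of the formula are unchanged.

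Part~(2) is the terminal case of this induction. Setting $k = \dim X$, the intersection $X \cap L$ is zero-dimensional; by Bertini applied to the linear system of general hyperplanes through $p$ — using that $p$ is smooth in $X$ — the point $p$ is an isolated reduced component of $X \cap L$. Under the component-through-$p$ interpretation implicit in part~(1), $\e_{X\cap L}(Y\cap L)$ collapses to $\e_p(Y\cap L)$, yielding the asserted equality.

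For part~(3) I invoke a second invariance principle. Starting from $\e_X Y = \e_p(Y \cap L_{\dim X})$ supplied by part~(2), I induct upward on $k$, adjoining one general hyperplane $H$ through $p$ at each stage. The inductive step amounts to the well-known local statement
\[
\e_p V \;=\; \e_p(V \cap H)
\]
whenever $V$ passes through $p$ with $\dim V \geq 2$ and $H$ is a general hyperplane through $p$: this holds because a generic linear form through $p$ is a superficial element for the maximal ideal of $\mathcal{O}_{V,p}$, and superficial elements preserve the Hilbert-Samuel multiplicity. The upper bound $k \leq \dim Y - 1$ ensures that every intermediate slice $Y \cap L_{k-1}$ has dimension at least two, so the principle applies at each step.

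The main obstacle is the one-step invariance in part~(1). The hyperplane $H$ there is not fully generic — it is constrained to pass through the prescribed point $p$ — so one must verify that the auxiliary schemes appearing in \cite[Theorem~5.3]{HH19} still meet $H$ in the expected dimension and with the expected degree. The correct formulation is that the generality of $p$ within $X$ absorbs the lost parameter of freedom in $H$, and establishing this cleanly is the algebraic heart of Step~1.
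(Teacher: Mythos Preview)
Your overall strategy is sound but inverts the paper's allocation of tools. In the paper, parts~(1) and~(2) are handled \emph{not} with the degree formula from \cite{HH19} but via a direct Segre-class computation in the Chow ring of $\PP^n$: for fully generic hyperplanes one has $\{s(X\cap L,Y\cap L)\}_{\dim X-\ell}=\{s(X,Y)\}_{\dim X}\cdot h^{\ell}$ (quoted from the literature), and comparing coefficients of $[X]$ and $[X\cap L]$ gives the result immediately with no induction. The degree formula from \cite{HH19} is reserved exclusively for part~(3), where the authors take the defining linear forms of $p$ as generators, set $d=1$, and check directly that $\deg(Y)=\deg(Y\cap L)$ and $\deg((Y\cap V_{\dim Y})-p)=\deg((Y\cap L\cap V_{>m})-p)$. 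Your scheme swaps these: you invoke \cite{HH19} for~(1)--(2) and the classical superficial-element reduction for~(3).

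Your argument for~(3) via superficial elements is correct and arguably more self-contained than the paper's; the fact that a generic linear form through $p$ is superficial for $\mathfrak m\subset\mathcal O_{V,p}$ and preserves $\e_p$ is standard local algebra and sidesteps the projective degree bookkeeping entirely. Conversely, your route for~(1)--(2) is the harder of the two: the complication you identify---that $H$ is only generic among hyperplanes through $p$, so one must check the auxiliary schemes in the degree formula still meet $H$ properly---is real, and you do not actually discharge it. The paper avoids this difficulty altogether: its Proposition for~(1)--(2) takes the $H_i$ to be \emph{fully} generic hyperplanes (not constrained through a preassigned $p$), applies the Segre-class slicing result, and only afterward lets $p$ be one of the resulting $\deg(X)$ points of $X\cap L$. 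That reframing is what makes the paper's treatment of~(1)--(2) short, and it is the main idea you are missing.
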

	
	In fact, the first two assertions follow readily from basic properties of Segre classes whereas the last one is new and makes essential use of the aforementioned degree formula from \cite{HH19}. As a consequence of this third assertion (for $k = \dim Y - 1$), every $\e_XY$ calculation can be reduced to the case where $X = p$ is a point and $Y = C$ is a curve in $\pp^n$ containing $p$. In this special case, the degree formula for Hilbert-Samuel multiplicity simplifies to
	\[
	\e_pC = \deg(C) - \deg((C \cap H) - p),
	\]
	where $H$ is a general hyperplane passing through $p$. While this is a convenient reformulation for algebraic computation of $\e_pC$, both degrees appearing on the right side are global computations in the sense that they require checking for intersections far away from $p$. The purpose of the next step is to replace these with a local computation near $p$.
	
	\subsection*{Step 2: Topology}  
	
	The starting point for our second step is the observation that $\deg(C)$ equals the cardinality of $C \cap H'$, where $H'$ is a general hyperplane in $\pp^n$. Crucially, we let $H'$ be parallel to the plane $H$ when restricted to an affine chart of $\pp^n$ containing $p$. The special ingredient here is {\em Thom's first isotopy lemma} \cite[Chapter I.1.5]{SMTbook}, which allows us to relate $\e_pC$ to the Euler characteristic (and hence, cardinality) of the zero-dimensional space 
	\[
	\Clk_p := C \cap \BB_\epsilon(p) \cap H'.
	\]
	Here $\BB_\epsilon(p)$ denotes a small closed ball around $p$ in a chart of $\pp^n$. In particular, we employ a homological argument to show the following result.
	
	\begin{theorem*}[\textbf{B}] If $p$ is any (possibly singular) point on a curve $C \subset \pp^n$, then its Hilbert-Samuel multiplicity satisfies $\e_pC = \chi(\Clk_p)$, where $\chi$ denotes Euler characteristic.
	\end{theorem*}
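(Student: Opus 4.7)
The plan is to combine the degree formula from Theorem A (with $k = \dim Y - 1$),
\[
\e_pC = \deg(C) - \deg((C \cap H) - p),
\]
with Thom's first isotopy lemma to localise both terms near $p$. Fix an affine chart containing $p$ and a linear functional $\ell$ with $H = \{\ell = 0\}$. The first move is to replace $H$ by the parallel perturbation $H' = \{\ell = c\}$ for a small $c \neq 0$: by Bertini, $H'$ is transverse to $C$, so $|C \cap H'| = \deg(C)$. After shrinking $\epsilon$ so that $C \cap H \cap \BB_\epsilon(p) = \{p\}$, I would partition $C \cap H' = A \sqcup B$ into a local piece $A = \Clk_p$ and a global piece $B = (C \cap H') \setminus \BB_\epsilon(p)$, so that $\deg(C) = |A| + |B|$.

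The heart of the argument is to establish $|B| = \deg((C \cap H) - p)$. For this I would interpolate between $H$ and $H'$ via the family $H_t = \{\ell = tc\}$ for $t \in [0,1]$ and consider the projection
\[
\pi : \{(x,t) : x \in C \setminus \mathrm{int}\,\BB_\epsilon(p),\ \ell(x) = tc\} \longrightarrow [0,1].
\]
Choose a Whitney stratification of $C$ having $\{p\}$ as a zero-dimensional stratum; for $H$ general through $p$ and $\epsilon$ small enough, $\ell$ restricts to a submersion on each positive-dimensional stratum meeting the total space of $\pi$, and $\pi$ is proper by construction. Thom's first isotopy lemma then renders $\pi$ a topologically trivial fibration over $[0,1]$; in particular, the fibers have the same Euler characteristic and hence the same cardinality, giving $|B| = |\pi^{-1}(1)| = |\pi^{-1}(0)| = |(C \cap H) \setminus \{p\}|$. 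A further Bertini-type argument shows that the latter cardinality coincides with the cycle degree $\deg((C \cap H) - p)$: for a generic hyperplane through $p$, every intersection of $C$ with $H$ off of $p$ is transverse and hence of multiplicity one.

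Combining these identities gives $|\Clk_p| = |A| = \deg(C) - |B| = \e_pC$, and since $\Clk_p$ is zero-dimensional its Euler characteristic equals its cardinality, yielding the desired equality $\chi(\Clk_p) = \e_pC$. The main obstacle I expect is the careful verification of the hypotheses of Thom's isotopy lemma --- constructing a Whitney stratification of $C$ compatible with the family $H_t$ in a neighbourhood of $(C \cap H) \setminus \{p\}$, and checking submersivity of $\ell$ along each positive-dimensional stratum --- together with the Bertini-type genericity arguments that guarantee transversality of $C$ and $H$ away from $p$. These technicalities form the bulk of the work, while the algebraic skeleton follows directly from the degree formula recalled at the start.
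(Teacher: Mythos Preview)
Your approach is correct and shares the paper's two main ingredients: the degree formula $\e_pC = \deg(C) - \deg((C\cap H)-p)$ and Thom's first isotopy lemma. The execution, however, differs. You excise the open ball $\BB_\epsilon(p)^\circ$ first and then apply the isotopy lemma on the complement $C \setminus \BB_\epsilon(p)^\circ$ to show that the ``far'' intersection count $|B|$ is constant along the family $H_t$; subtraction then isolates $|\Clk_p|$ directly. The paper instead keeps the whole curve, builds a zigzag of inclusions
\[
C\cap\pi_\xi^{-1}(0)\hookrightarrow C\cap\pi_\xi^{-1}[0,\delta]\hookleftarrow C\cap\pi_\xi^{-1}(\delta),
\]
proves the left map is a homotopy equivalence by flowing along a lifted vector field, and then uses a relative-homology computation to localize the Euler-characteristic discrepancy to the fiber over $p$. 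Your route is the more elementary one, since it avoids the relative-homology step entirely and reduces everything to counting points; the paper's route buys some extra structural generality, in that its two intermediate results (the homotopy equivalence and the localization lemma) are stated and proved without any constraint on $\dim C$, with zero-dimensionality entering only at the very last step --- a point the authors themselves flag in a remark.

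One technical point worth tightening in your version: to guarantee that no point of $C\cap H_t$ crosses $\partial\BB_\epsilon(p)$ as $t$ runs over $[0,1]$ --- which is what your application of the isotopy lemma on the boundary stratum really requires --- you should note that $0 \notin \ell(C \cap \partial\BB_\epsilon(p))$ (because $C \cap H \cap \BB_\epsilon(p) = \{p\}$ lies in the interior), and then take $|c|$ small enough that the real segment $\{tc : t \in [0,1]\}\subset\CC$ avoids the compact image $\ell(C\cap\partial\BB_\epsilon(p))$ as well as the finitely many critical values of $\ell$ on $C_{\mathrm{reg}}$ and the values $\ell(q)$ at other singular points $q$. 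With that in hand, the fibration is genuinely trivial and your cardinality count goes through.
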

	
	The space $\Clk_p$ plays a fundamental role in (complex) stratified Morse theory --- it provides {\em normal Morse data} for $p$ with respect to a stratified Morse function defined on $C$, and is called the {\bf complex link} of $p$ in $C$ \cite[Ch II.2]{SMTbook}. Variants of Theorem ({\bf B}) have been assigned as exercises to the reader on several occasions, including the Introduction to \cite{SMTbook} and \cite[Ex 4.6]{masseychar}. However, we were unable to locate a proof in the literature; since it forms an essential part of our overall argument, we have included a proof here.
	
	\subsection*{Step 3: Geometry} It remains to estimate the cardinality of $\Clk_p$ using a uniform finite point sample $S$ chosen from $B := C \cap \BB_\epsilon(p)$. The main difficulty here is that generically the intersection $S \cap H'$ will be empty even when the sample size is enormous. As such, we are compelled to search for points of $S$ which lie within some small distance $\eta > 0$ of $H'$, and hope that these points naturally organize into $\e_XY$ many clusters. The key ingredient here is a suite of {\em geometric inference results}, which date back to the work of Niyogi, Smale and Weinberger from \cite{niyogietal}. Given a compact Riemannian submanifold $M \subset \RR^d$ and a probability parameter $\gamma \in (0,1)$, these results give explicit bounds on the cardinality of a finite point sample $P \subset M$ required to estimate the homology of $M$ with probability exceeding $(1-\gamma)$.
	
	Recently, Wang and Wang have extended results of \cite{niyogietal} to the case where $M \subset \RR^d$ is a smooth submanifold with boundary \cite{wang2}; they provide an explicit lower bound $N_M(\alpha,\gamma)$ on the size of a uniform point sample $P \subset M$ required to ensure, again with probability at least $(1-\gamma)$, that $P$ is $\alpha/2$-dense in $M$. These results require $\alpha$ to be sufficiently small relative to the injectivity radii of the embeddings $M \inj \RR^d$ and $\partial M \inj \RR^d$ of the manifold and its boundary respectively. Although the space $B$ of interest to us is not a manifold (thanks to the singularity at $p$), it does become a manifold with boundary by removing the interior of a smaller ball $\BB_{\epsilon_0}(p)$ with $\epsilon_0 < \epsilon$. After this excision, we can safely apply the density results from \cite{wang2} and obtain the following result.
	
	\begin{theorem*} [\textbf{C}] There exists, for all sufficiently small radius $\alpha > 0$ and probabilities $\gamma \in (0,1)$, an explicit bound $N_M(\alpha,\gamma)$ with the following property. Any uniformly sampled subset 
		\[
		S \subset C \cap \left(\BB_\epsilon(p) - \BB_{\epsilon_0}(p)^\circ\right)
		\] of cardinality $\#S > N_M(\alpha,\gamma)$ can be used to correctly estimate the Euler characteristic $\chi(\Clk_p)$ with probability exceeding $(1-\gamma)$.
	\end{theorem*}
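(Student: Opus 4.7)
The plan is to combine the Wang--Wang density theorem for manifolds with boundary with a local transversality argument near $\Clk_p$ and a single-linkage clustering step. First I would choose $\epsilon_0 < \epsilon$ small enough that $p$ is the only singular point of $C$ inside $\BB_\epsilon(p)$, the two spheres $\partial\BB_{\epsilon_0}(p)$ and $\partial\BB_\epsilon(p)$ each meet $C$ transversely, and every point of $\Clk_p$ lies in the interior of the annular set
\[
M := C \cap \left(\BB_\epsilon(p) - \BB_{\epsilon_0}(p)^\circ\right).
\]
Such radii form a cofinal family by stratified Morse theory, and with these choices $M$ is a compact smooth real two-dimensional submanifold-with-boundary of the ambient affine chart. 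The density theorem of \cite{wang2} then supplies, for all sufficiently small $\alpha > 0$ (smaller than the injectivity radii of the embeddings $M \inj \RR^{2n}$ and $\partial M \inj \RR^{2n}$) and all $\gamma \in (0,1)$, an explicit bound $N_M(\alpha,\gamma)$ such that a uniform sample $S \subset M$ of cardinality exceeding $N_M(\alpha,\gamma)$ is $\alpha/2$-dense in $M$ with probability at least $1-\gamma$.

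Conditioning on this density event, I would recover $\chi(\Clk_p) = \#\Clk_p$ from $S$ as follows. Set
\[
S_\alpha := \{\, s \in S : \mathrm{dist}(s, H') < \alpha \,\},
\]
form the graph on $S_\alpha$ whose edges join pairs of points at distance at most $\alpha$, and return its number of connected components. Correctness of this count rests on transversality: because $H'$ is chosen generically, it misses the singular locus of $C$ inside $\BB_\epsilon(p)$, and at every $q \in \Clk_p$ the curve $C$ is smooth and meets $H'$ transversely. Hence there exist positive constants $\delta_0$ and $\mu$, depending only on $(C, H', \epsilon, \epsilon_0)$, with the following property: whenever $\alpha < \delta_0$, the preimage in $M$ of the open $\alpha$-tube around $H'$ decomposes as a disjoint union of $\#\Clk_p$ embedded arcs of diameter $O(\alpha)$, one through each point of $\Clk_p$ and pairwise separated by distance at least $\mu$. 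Provided also that $\alpha < \mu/2$, the $\alpha/2$-density of $S$ deposits at least one sample in each arc, while the gap $\mu$ forbids edges between samples lying in different arcs; the single-linkage count then returns exactly $\#\Clk_p$, as required.

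The main obstacle is the quantitative control needed to turn the separation $\mu$, the transversality scale $\delta_0$, and the two injectivity radii into effective numerical estimates, so that $N_M(\alpha,\gamma)$ can be written down explicitly rather than asserted to exist. Each of these quantities is nonzero by genericity of $H'$ together with transversality of $\partial\BB_\epsilon(p)$ and $\partial\BB_{\epsilon_0}(p)$ to $C$, but extracting actual numbers requires reach-type bounds for $M$ and $\partial M$ and a positive lower bound on the transverse-intersection angle at each $q \in \Clk_p$. The analogous difficulties are handled in \cite{niyogietal,wang2}, and I expect the same machinery to deliver an admissible range of $\alpha$ together with a closed-form expression for $N_M(\alpha,\gamma)$.
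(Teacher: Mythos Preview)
Your proposal is correct and follows essentially the same route as the paper: excise a small ball around $p$ so that the annular piece $M=C'$ is a compact $2$-manifold with boundary, apply the Wang--Wang density bound to get an $\alpha/2$-dense sample with probability at least $1-\gamma$, restrict to the samples within distance $\alpha$ of the offset hyperplane, and count clusters. The only place where your write-up needs a small adjustment is the linkage threshold: the paper clusters at scale $2\alpha$ under the constraint $\alpha<\mu/4$, using the containment $S'\subset\bigcup_i \BB_\alpha(x_i)$ so that each cluster has diameter at most $2\alpha$ while distinct clusters are more than $\mu-2\alpha>2\alpha$ apart; your threshold of $\alpha$ does not guarantee connectivity within a single $\BB_\alpha(x_i)$, since two samples there can sit up to $2\alpha$ apart and the intermediate $\alpha/2$-dense witnesses along the arc need not themselves land in $S_\alpha$. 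With that tweak your argument and the paper's coincide.
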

	
	Combining Theorems ({\bf A}), ({\bf B}) and ({\bf C}) gives the promised main result.
	
	\subsection*{Organisation} In Section \ref{section:complink} we briefly review the definition of the complex link. Section \ref{sec:algmult} focuses on Step 1; here we give a brief overview of the Hilbert-Samuel multiplicity and prove Theorem ({\bf A}). In Section \ref{sec:pointcurve} we implement Step 2 by providing a proof of the folklore Theorem ({\bf B}). And finally, in Section \ref{section:Multiplcity_From_Finite_Samples} we carefully state and establish Theorem ({\bf C}) by describing not only the precise form of the bound $N(\alpha,\gamma)$, but also the precise constraints on $\alpha$ imposed by the local geometry of $C$ near $p$.

	{\footnotesize
		\subsection*{Acknowledgements}
		
		Mark Goresky kindly shared an advance copy of his survey {\em Morse theory, stratifications and sheaves} \cite{goresky-survey} with us; that paper served as our Polaris while we navigated the formidable waters surrounding these topics. We  are grateful to Heather Harrington for organizing the event in Oxford where our collaboration was conceived, to Kate Turner for hosting VN's visit to Canberra where it reached adolescence, and to Yossi Bokor for writing helpful software during these early days. We thank the Sydney Mathematics Research Institute (SMRI) at the University of Sydney for their generous hospitality. VN's work was supported by the EPSRC grant EP/R018472/1 and by the DSTL grant D015 funded through the Alan Turing Institute.
	}

	\section{Complex Links}\label{section:complink}
	
	A {\em stratification} of a topological space $\bW$ is a filtration 
	\[
	\emptyset = W_{-1}\subset W_0 \subset \cdots \subset W_k=\bW
	\] by closed subspaces so that each consecutive difference {$W_{i}-W_{i-1}$} is a (possibly empty or disconnected) $i$-dimensional manifold called the $i$-{\em stratum}. Throughout this section, $\bW$ will denote a Whitney-stratified complex analytic subspace of $\CC^n$. We assume that each stratum $X \subset \bW$ is a connected complex analytic manifold, and write $Y > X$ to indicate that the closure of the stratum $Y$ contains the stratum $X$. We further require all pairs of strata $X < Y$ to satisfy Whitney's {Condition (B)} --- see \cite[Section 19]{whitney1965tangents}, \cite[Section 2]{Mather2012}, or \cite[Chapter I.1.2]{SMTbook}.  Let $T_pX$ denote the $(\dim X)$-dimensional linear subspace of $\CC^n$ which corresponds to the tangent space of a stratum $X$ at a point $p$ in $X$. 
	
	Fix a connected component of a stratum $X \subset \bW$ and consider an arbitrary point $p$ in $X$. Since it remains difficult to illustrate even $2$-dimensional complex varieties, the following real picture (where $n=3$ and $\dim \bW = 2$ while $\dim X=1$) will serve as a proxy for the local structure of $\bW$ near $p$.
	
	\begin{figure}[h!]
		\centering
		\includegraphics[scale=.35]{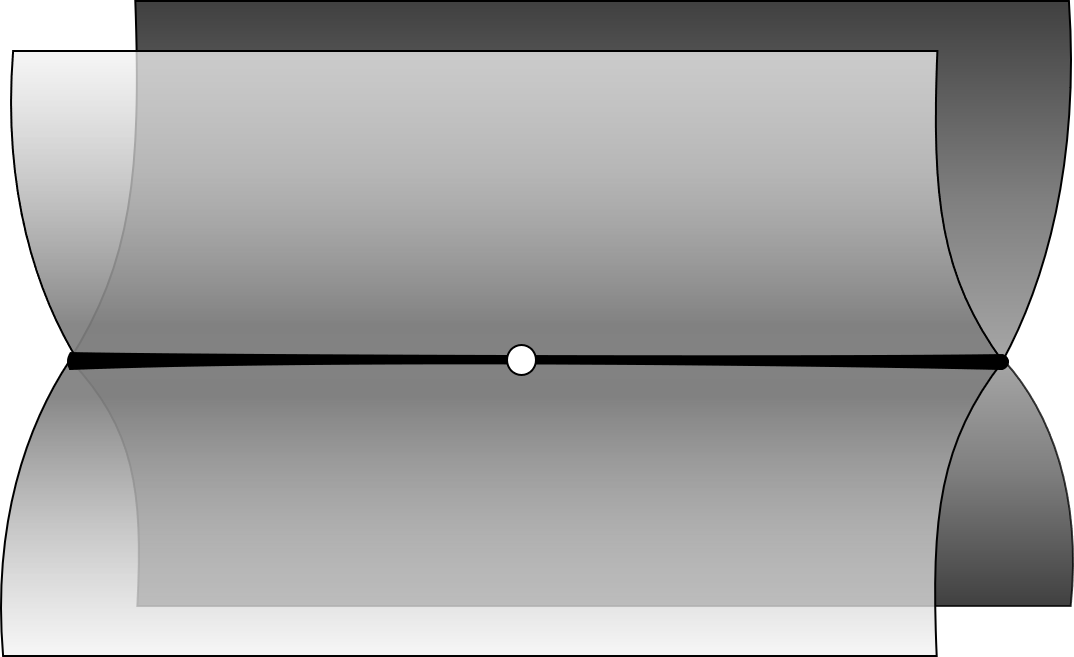}
	\end{figure}
	
	\noindent The stratum $X$ is represented by the horizontal line along which the four sheets intersect, and the chosen point $p$ is located near the center of $X$. We say that an affine subspace $A \subset \CC^n$ containing $p$ is {\em transverse} to $X$ at $p$ if the sum of tangent subspaces given by \[T_pX + T_pA  = \set{v+w \mid v\in T_pX \text{ and }w \in T_pA}\] equals $T_p\CC^n = \CC^n$. 
	
	\begin{definition} \label{def:normslice} A subset $\bN \subset \bW$ is called a {\bf normal slice} to $X$ at $p$ if it equals the intersection $\bW \cap A$ for some $(n-\dim X)$-dimensional affine subspace $A \subset \CC^n$ which intersects $X$ transversely at $p$. 
	\end{definition}
	One possible choice of $\bN$ for our example is shown below:
	
	\begin{center}
		\includegraphics[scale=.35]{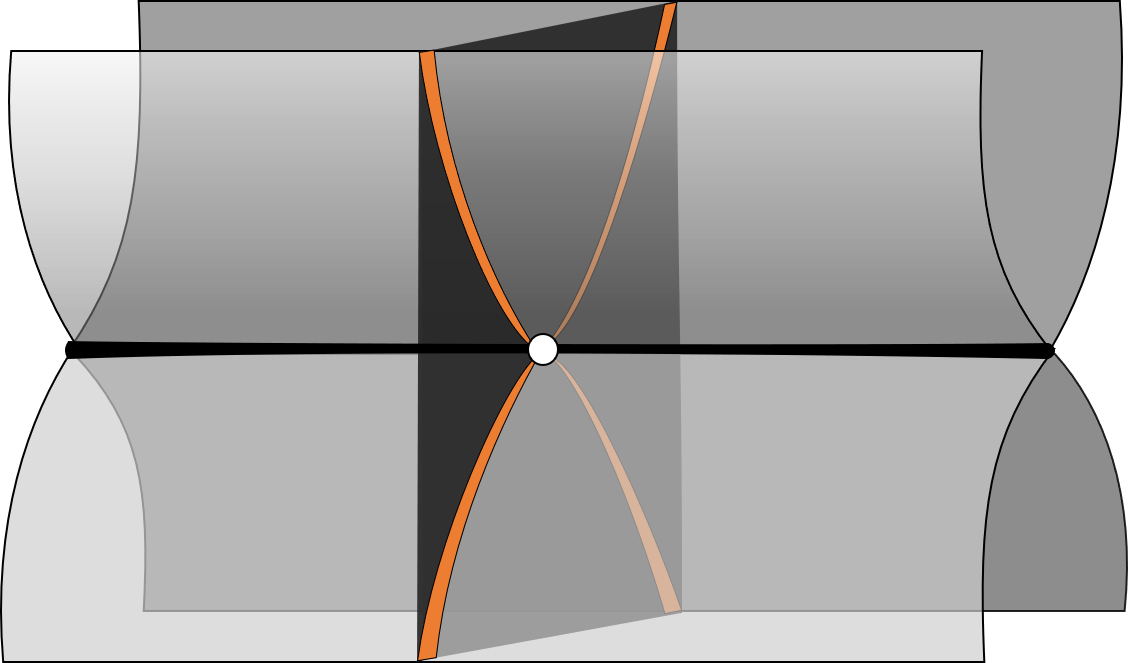}
	\end{center}
	
	\noindent Here $A$ is the plane which crosses $X$ at $p$, while $\bN$ is the union of four half-open arcs, all of which intersect at $p$. Evidently, $\bN$ will not be a manifold in general; on the other hand, it follows from the definition of a Whitney stratification that $A$ will remain transverse, at least in a small neighbourhood around $p$, to all higher strata $Y > X$. Thus, $\bN$ inherits a Whitney stratification from $\bW$ near $p$ as follows. Each $(\dim Y)$-dimensional stratum $Y > X$ carves out a (possibly disconnected, $(\dim Y-\dim X)$-dimensional) stratum $Y \cap A$ of $\bN$.  
	
	Fix a radius $\epsilon > 0$ so that the intersection of $\bN$ with the open ball $\BB_\epsilon(p)$ of radius $\epsilon$ around $p$ inherits a Whitney stratification from $\bW$ in the manner described above\footnote{\noindent More precisely, two natural transversality constraints must hold for every radius $e \leq \epsilon$ and for every stratum $Y$ of $\bW$. First, the boundary of $\BB_e(p)$ must be transverse to $Y$ in $\CC^n$, and second, the boundary of $\BB_e(p) \cap A$ must be transverse to $Y \cap A$ in $A$.}. We write $\bN_\epsilon(p) = \bN \cap \BB_\epsilon(p)$ to indicate this {\em restricted} normal slice. The next definition will make use of our chosen $\bN$ and $\epsilon$, and also of the usual inner product $\ip{\bullet,\bullet}$ defined on the ambient space $\CC^n$. 
	
	\begin{definition}\label{def:xi}
		A vector $\xi$ in the affine space $A$ is called {\bf nondegenerate} for the pair $(\bN,\epsilon)$ if the following property holds for all strata $Y > X$. Given any sequence $\{(q_i,v_i)\}$ in the tangent bundle of $Y' = Y \cap \bN_\epsilon(p)$ where $q_i$ limits to $p$, if the $v_i$ limit to some nonzero vector $v$ then $\ip{\xi,v} \neq 0$.
	\end{definition}
	
	If we restrict our pictorial example to the affine plane $A$, then the set of degenerate vectors will span the vertical line through $p$ because the orthogonal complement of this vertical line (in $A$ through $p$, as drawn below) shares a limiting tangent with all four arcs of $\bN$. For any vertically-aligned $\xi$, one can find a sequence $(q_i,v_i)$ in the tangent bundle of each arc with $q_i \to p$ and $v_i \to v \neq 0$ lying along the horizontal line, which in turn forces $\ip{\xi,v} = 0$. Any $\xi$ off the vertical line will be nondegenerate.
	\begin{center}
		\includegraphics[scale=.4]{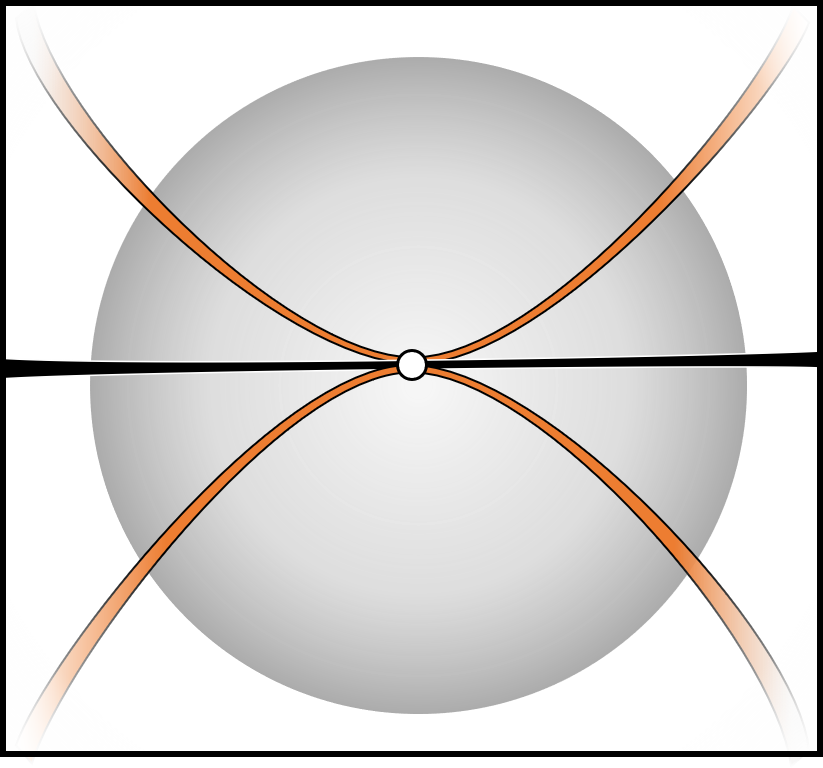}
	\end{center}
	
	\noindent Fix a nondegenerate vector $\xi$ for $(\bN,\epsilon)$, and consider the map 
	\[
	\pi_\xi:A \to \RR
	\] given by taking the real part of the affine-linear complex functional $z \mapsto \ip{z-p,\xi}$. By nondegeneracy, there exists a  $\delta > 0$ so that if the differential \[(d\pi_\xi)_q:T_qY' \to \RR\] is identically zero at some point $q \neq p$ in a stratum $Y'$ of $\bN_\epsilon(p)$, then $|\pi_\xi(q)| > \delta$ . In other words, the preceding definitions and choices have been concocted in order to ensure that $\pi_\xi$ restricts to a {\bf stratified Morse function} on  $\bN_\epsilon(p)$ as in \cite[Chapter I.2]{SMTbook}; and moreover, $p$ is its unique critical point valued in the interval $[-\delta,\delta]$. Here is a summary of all these choices that have been made for the stratum $X \subset \bW$ of dimension $\dim X$:
	\begin{enumerate}
		\item a point $p \in X$,
		\item an $(n-\dim X)$-dimensional affine subspace $A \subset \CC^n$ transverse to $X$ at $p$,
		\item a radius $\epsilon > 0$ so that $\bN_\epsilon(p) = \bW \cap A \cap \BB_\epsilon(p)$ inherits a  stratification from $\bW$,
		\item a nondegenerate vector $\xi \in A$, and finally,
		\item another radius $\delta \in (0,\epsilon)$ so that $\pi_\xi:A \to \RR$ has no critical points $q \neq p$ in $\bN_\epsilon(p)$ with $\pi_\xi(q)$ in $[-\delta,\delta]$.
	\end{enumerate}
	The following definition makes provisional use of this tuple $(p,A,\epsilon,\xi,\delta)$.
	
	\begin{definition}\label{def:clink}
		The {\bf complex link} of the stratum $X \subset \bW$ with respect to the choices $(p,A,\epsilon,\xi,\delta)$ is the intersection 
		\[
		\Clk_X = \bN_\epsilon(p) \cap \pi_\xi^{-1}(\delta).
		\]
	\end{definition}
	
	Returning to our example one final time: the hyperplane $\pi_\xi^{-1}(\delta)$ is a non-horizontal line in the plane $A$ which passes near, but not through, the central point $p$. In a small $\epsilon$-ball around $p$, this line generically intersects the arcs which form $\bN_\epsilon(p)$ in two points, so the complex link $\Clk_X$ in this case is just the two-point space:
	
	\begin{figure}[h!]
		\includegraphics[scale=.4]{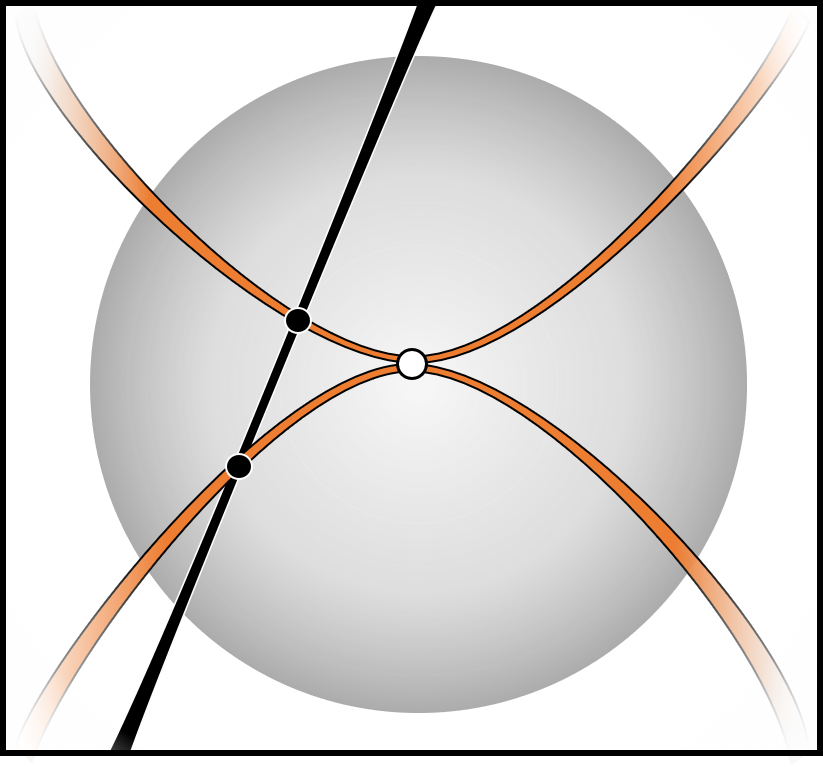}
	\end{figure}
	
	\noindent The (stratified homeomorphism type of the) complex link $\Clk_X$ depends only on the stratum $X$, and not on the auxiliary choices $(p,A,\epsilon,\xi,\delta)$ described above \cite[Chapter II.2]{SMTbook}. It is also interesting to note that the invariance of $\Clk_X$ to the chosen direction $\xi$ is entirely a feature of complex analytic geometry --- for real analytic Whitney stratified spaces, the intersection $\bN_\epsilon(p) \cap \pi_\xi^{-1}(\delta)$ is liable to change as $\xi$ is varied.
	
	\begin{remark}
		In this paper we will be exclusively interested in the complex link of a point (i.e., a zero-dimensional stratum) within a complex projective variety. In this  special case, one is not required to construct a normal slice, so the formula from Definition \ref{def:clink} reduces to $\Clk_X = \bW \cap \BB_\epsilon(p) \cap \pi_\xi^{-1}(\delta)$. 
	\end{remark}

	\section{The Hilbert-Samuel Multiplicity}\label{sec:algmult}

	Let $R = \CC[x_0,\ldots,x_n]$ denote either the coordinate ring of projective space $\pp^n$ or affine space $\CC^{n+1}$. In the projective case we will implicitly assume that $R$ is graded and all of its ideals considered below are homogeneous. Let $X$ be an irreducible complex (affine or projective) algebraic variety given by a prime ideal $I \lhd R$ and let $Y$ be a scheme corresponding to a primary ideal $J \subset I$. The {\em local ring of $Y$ along $X$}, usually written $\cO_{X,Y}$, is the localization of $(R/J)$ at $I$. The following notion is due to Samuel \cite{samuel1955methodes}.
	
	\begin{definition} \label{def:algmult}
		Let $\cM$ be the maximal ideal of $\cO_{X,Y}$ and let $c$ be the codimension $\dim Y - \dim X$. The {\bf Hilbert-Samuel function} of $Y$ along $X$ is \[
		\textbf{HS}(t) = \text{length}\left(\cO_{X,Y}/\cM^t\right).
		\]
		For all $t \gg 0$, this function is a polynomial in $t$ of degree $c$ whose leading coefficient is a strictly positive integer divisible by $c!$ --- and the {\bf Hilbert-Samuel multiplicity of $Y$ along $X$}, written $\e_XY$, is the leading coefficient of the normalized polynomial $(1/c!)\cdot {\bf HS}(t)$.
	\end{definition}
	
	It is shown in \cite[Section 4.3]{fulton2013intersection} that $\e_XY$ is also equal to the coefficient of $[X]$ in the {\bf Segre class} $s(X,Y)$, which naturally lives in the {\em Chow group} of $X$ (or in the {\em Chow ring} of an ambient smooth variety $M$ via push-forward, $X\subset Y\subset M$; we will often work in the $M=\pp^n$ setting). 
	
	\subsection{Multiplicities from Degrees}
	
	When $X \subset Y \subset \pp^n$ are projective varieties, it is often algorithmically convenient to extract $\e_XY$ from a choice $B_X = \set{f_0,\dots, f_r}$ of homogeneous polynomials that generate the defining ideal $I \lhd R$ of $X$. We will assume here that all the $f_i$ have the same degree $d$, which is always possible to arrange without loss of generality \cite[Section 2.1.4]{HH19}. Let $L_i$ be a generic $(n-i)$-dimensional linear subspace of $\pp^n$, and let $V_i \subset \pp^n$ be the varieties given by 
	\begin{align} \label{eq:V}
	V_i = \set{x \in \pp^n \mid F_1(x) = F_2(x) = \cdots = F_{\dim Y-i}(x) = 0},
	\end{align} where the $F_j$ are homogeneous polynomials of degree $d$ that have the form
	\[
	F_j = \sum_{k=0}^r \lambda_k^j f_k
	\]
	for general choices of $\lambda_k^j \in \CC$. (Note $V_i$ contains $X$ for all $i$ by design). 
	
	In \cite{HH19} it is shown that the Segre class $s(X,Y)$, and hence the  multiplicity $\e_XY$, is determined by the numbers
	\begin{align}\label{eq:lambdadeg}
	\LC^i_XY = \deg(Y) \cdot d^{\dim Y - i}- \deg((Y \cap V_i \cap L_i) - X),
	\end{align}
	for each $i$ between $0$ and $\dim X$. In particular, \cite[Theorem 5.3]{HH19} establishes that 
	\begin{align}\label{eq:multlambda}
	\e_XY = \frac{\LC^{\dim X}_XY}{\deg X},
	\end{align} 
	or more explicitly,
	\begin{align}\label{eq:multlambdalong}
	\e_XY = \frac{\deg(Y) \cdot d^{\dim Y - \dim X}- \deg((Y \cap V_{\dim X} \cap L_{\dim X}) - X)}{\deg X}.
	\end{align}
	In order to treat affine varieties on an equal footing with projective ones when it comes to using (\ref{eq:multlambda}) and related formulas, we will appeal to the following result. The idea is to replace the affine varieties $X\subset Y$ in $\CC^n$ by their {\em projective closures} $PX \subset PY$ in $\pp^n$ --- see \cite[Exercise I.2.9]{Hartshorne1974} for a definition.  
	\begin{proposition}\label{prop:affine_mult}
		Let $X\subset Y$ be closed subvarieties of affine space $\CC^n$ and let $PX \subset PY$ denote their projective closures in $\pp^n$. Then, we have
		\[
		\e_XY=\frac{\LC_{PX}^{\dim PX} PY}{\deg PX},
		\] 
		i.e., the multiplicity of $Y$ along $X$ can be computed from (\ref{eq:multlambda}) applied to the projective closures of $X$ and $Y$.
	\end{proposition}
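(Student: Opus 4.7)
The plan is to reduce the statement to the already-established projective formula (\ref{eq:multlambda}) by exploiting the fact that the Hilbert-Samuel multiplicity is a purely local invariant of the embedding $X \subset Y$ at the generic point of $X$, and is therefore preserved under projective closure.

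First I would unwind Definition \ref{def:algmult}: the multiplicity $\e_X Y$ is determined entirely by the asymptotics of $\text{length}(\cO_{X,Y}/\cM^t)$, and hence depends only on the local ring $\cO_{X,Y}$. Identifying $\CC^n$ with the standard affine chart $U_0 = \{x_0 \neq 0\} \subset \PP^n$ gives $X = PX \cap U_0$ and $Y = PY \cap U_0$. Since $X$ is irreducible and dense in $PX$, the two varieties share a common generic point $\eta$, and likewise $Y$ and $PY$ share their generic points. Because localization at a point commutes with restriction to any open subset containing that point, there is a canonical isomorphism of local rings $\cO_{X,Y} \cong \cO_{PX,PY}$, yielding the intermediate identity
\[
\e_X Y = \e_{PX}(PY).
\]

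It then remains to apply equation (\ref{eq:multlambda}) directly to the projective pair $PX \subset PY \subset \PP^n$, which gives
\[
\e_{PX}(PY) = \frac{\LC^{\dim PX}_{PX} PY}{\deg PX}.
\]
Combining the two displays proves the proposition.

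The main place where care is needed is in the identification $\cO_{X,Y} \cong \cO_{PX,PY}$. One must verify that the homogenization of the primary ideal cutting out $Y$ introduces no embedded or spurious components at infinity which could alter the stalk at $\eta$. This follows from the classical correspondence between homogeneous primary ideals in $\CC[x_0,\dots,x_n]$ not contained in $(x_0)$ and primary ideals in $\CC[x_1,\dots,x_n]$ (see \cite[Exercise I.2.9]{Hartshorne1974}), together with the observation that this correspondence is compatible with localization away from the hyperplane $\{x_0 = 0\}$, which is disjoint from $\eta$ by construction.
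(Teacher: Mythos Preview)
Your proposal is correct and follows essentially the same approach as the paper: both arguments establish the local ring isomorphism $\cO_{X,Y}\cong\cO_{PX,PY}$ (you via the shared generic point and the fact that localization commutes with restriction to an open subset, the paper via a chain of three explicit isomorphisms through intermediate open dense subsets), and then invoke~(\ref{eq:multlambda}) for the projective pair. Your phrasing in terms of generic points is slightly more streamlined, but the content is the same.
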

	\begin{proof}
		Select an affine chart of $\pp^n$ specified by a general hyperplane $H$, i.e. $\CC^n\cong \pp^n-H$ and an open set $U \subset \pp^n$ so that ({\bf a}) $PY \cap U$ is open and dense in $PY$,  ({\bf b}) if $V=U-H$ then $Y \cap V$ is open and dense in $Y\subset \CC^n$, and ({\bf c}) $PX \cap U$ is open and dense in $PX$ (i.e. $PX$ is not contained in $\pp^n-U$). By ({\bf a}) and ({\bf c}) we obtain an isomorphism of local rings \[\cO_{PX,PY} \simeq \cO_{PX \cap U,PY \cap U}.\] Next, since $X$ is a subvariety of $Y$ with $Y \cap V$ dense in $Y$ by ({\bf b}), we have a second isomorphism of local rings \[
		\cO_{PX \cap U,PY \cap U} \simeq \cO_{X \cap V,Y \cap V}.\] And finally, since $Y\cap V$ is dense in $Y$ and since $PX$ is not contained in $\pp^n-U$, then $X\cap V$ is dense in $X$ and we have a third isomorphism \[\cO_{{X}\cap V,{Y}\cap V} \simeq \cO_{X,Y}.\] Stringing together these three isomorphims, one obtains $\cO_{X,Y} \simeq \cO_{PX,PY}$; since the multiplicities $\e_XY$ and $\e_{PX}{PY}$ are completely determined by the corresponding local rings, they must be equal. The desired conclusion now follows from (\ref{eq:multlambda}).  
	\end{proof}
	
	We expect that some version of the above argument, (i.e., that $\e_XY$ must equal $\e_{PX}{PY}$ because the two associated local rings are isomorphic) already exists in the literature, but we were unable to locate it and have therefore included this proof for completeness. This result facilitates the use of \eqref{eq:multlambdalong} for a pair of affine varieties.%; in this case, the Hilbert-Samuel multiplicity measures the relative topology of their projective closures.
	
	\subsection{Multiplicities of Linear Sections}\label{sec:eslice}
	
	Here we describe the behaviour of the Hilbert-Samuel multiplicity $\e_XY$ for complex projective varieties $X \subset Y$ when both $X$ and $Y$ are replaced by their intersections with (sufficiently generic) linear spaces. The proposition below can be seen as a direct consequence of standard properties of Segre classes (along with the relation between Segre classes and multiplicities).
	
	\begin{proposition}\label{prop:algmultsection}
		Let $Y$ be a pure dimensional subscheme of the complex projective space $\PP^n$, let $X$ be an irreducible subvariety of $Y$ and let $L \subset \pp^n$ be given by an intersection
		\[
		L = H_1 \cap H_2 \cap \cdots \cap H_\ell,
		\]
		where each $H_i \subset \pp^n$ is a generic hyperplane. If the codimension $\ell = n - \dim L$ is strictly less than $\dim X$, then the multiplicities $\e_XY$ and $\e_{X \cap L}(Y \cap L)$ are equal. Further if $\ell=\dim(X)$ then $\e_XY=\e_p(Y\cap L)$, where $p$ is any of the $\deg(X)$ points in $X\cap L$. 
	\end{proposition}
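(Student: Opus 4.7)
The plan is to use the characterization of the Hilbert-Samuel multiplicity as the coefficient of the fundamental class $[X]$ in the top-dimensional component of the Segre class $s(X,Y) \in A_*(X)$, combined with the compatibility of Segre classes under intersection by generic hyperplanes. The key input is the standard formula
\[
s(X \cap H, Y \cap H) = c_1(\cO_{\pp^n}(H)) \cap s(X,Y),
\]
valid in $A_*(X \cap H)$ whenever $H \subset \pp^n$ is a hyperplane meeting both $X$ and $Y$ properly (see \cite[Chapter 4]{fulton2013intersection}); this reduces the proposition to an inductive argument on $\ell$.

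First I would establish the base case $\ell = 1$. Choose a single generic hyperplane $H$, so that $H$ meets both $X$ and $Y$ properly and in the expected dimensions. Decompose $s(X,Y) = \e_XY \cdot [X] + \sigma$ where $\sigma$ has strictly smaller-dimensional support; then capping with $[H]$ sends $[X]$ to $[X \cap H]$ (by genericity, this intersection is of the expected dimension and generically reduced), while sending $\sigma$ to classes of dimension less than $\dim X - 1$. Consequently the top-dimensional component of $s(X \cap H, Y \cap H)$ equals $\e_XY \cdot [X \cap H]$, and reading off its leading coefficient gives $\e_{X \cap H}(Y \cap H) = \e_XY$. Iterating this with generic $H_1, \ldots, H_\ell$ is straightforward: each $H_{i+1}$ may be taken generic relative to the intermediate pair $X \cap (H_1 \cap \cdots \cap H_i) \subset Y \cap (H_1 \cap \cdots \cap H_i)$ because the locus of hyperplanes failing any of the required transversality conditions has positive codimension in the dual projective space $(\pp^{n})^*$. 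For $\ell < \dim X$, Bertini-type arguments guarantee that $X \cap L$ remains irreducible at each stage, yielding $\e_XY = \e_{X \cap L}(Y \cap L)$.

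When $\ell = \dim X$, one final generic cut reduces $X$ to a reduced $0$-dimensional scheme $\{p_1, \ldots, p_{\deg X}\}$ by B\'ezout's theorem, and the iterated capping produces, in $A_0(X \cap L)$, the identity
\[
\sum_{i=1}^{\deg X} \e_{p_i}(Y \cap L) \cdot [p_i] \;=\; \e_XY \cdot [X \cap L] \;=\; \e_XY \sum_{i=1}^{\deg X} [p_i];
\]
linear independence of the $[p_i]$ in $A_0$ of a reduced zero-dimensional scheme then forces $\e_{p_i}(Y \cap L) = \e_XY$ for every $i$, as desired. The main technical obstacle is the Segre class compatibility formula used at the outset: its rigorous justification requires that the normal cone of $X \cap H$ in $Y \cap H$ be obtained by base change from the normal cone of $X$ in $Y$, which in turn relies on the generality of $H$ and the properness of the divisor-subvariety intersection. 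Once this ingredient is in hand, the rest of the argument reduces to bookkeeping on top-dimensional components of Chow classes.
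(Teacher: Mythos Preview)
Your proof is correct and follows essentially the same strategy as the paper's: both rest on the compatibility of Segre classes with generic hyperplane sections (the paper invokes this as \cite[Corollary~3.2]{Harris2017}, which is the pushed-forward form of your capping formula $s(X\cap H,Y\cap H)=c_1(\cO_{\pp^n}(H))\cap s(X,Y)$). The one substantive difference is where the computation lives. The paper pushes everything forward to $A^*(\pp^n)\cong\ZZ[h]/\langle h^{n+1}\rangle$ and compares degrees; you instead keep the classes in $A_*(X\cap L)$ and argue inductively. Your choice pays off in the $\ell=\dim X$ case: because the $[p_i]$ are linearly independent in $A_0$ of a reduced finite scheme, you obtain $\e_{p_i}(Y\cap L)=\e_XY$ for every $i$ directly, whereas after pushforward to $\pp^n$ the $[p_i]$ all coincide with $h^n$ and the paper's comparison only yields $\sum_i \e_{p_i}(Y\cap L)=\deg(X)\cdot\e_XY$, with the pointwise equality left implicit.
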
 
	\begin{proof}In this proof we will work with the pushforward to the Chow ring of $\pp^n$ of the Segre class $s(X,Y)$, in a slight abuse of notation this will also be denoted $s(X,Y)$. Denote this Chow ring as $A^*(\PP^n)\cong \ZZ[h]/\langle h^{n+1} \rangle$ where $h$ is the rational equivalence class of a general hyperplane. 
		Since each $H_i$ is a general divisor on $\pp^n$, the coefficient of $h^{\dim X - \ell}$ in the Segre class $s(X \cap L,Y \cap L)$ equals the coefficient of $h^{\dim X}$ in the Segre class $s(X,Y)$, i.e.,
		\[
		\{s(X\cap L,Y\cap L)\}_{\dim X - \ell}  = \{s(X,Y)\}_{\dim X} \cdot h^\ell.
		\]
		A proof of the above property of Segre classes can be found, for example, in \cite[Corollary~3.2]{Harris2017}. First suppose that $\ell<\dim(X)$. Using the fact that $\e_{X \cap L}(Y \cap L)$ is the coefficient of $[X \cap L]$ in $s(X \cap L,Y \cap L)$, one obtains 
		\begin{align*}
		\{s(X\cap L,Y\cap L)\}_{\dim X - \ell} & = \e_{X\cap L}(Y\cap L) \cdot [X\cap L] \\
		&=\e_{X\cap L}(Y\cap L) \cdot \deg X \cdot h^{n-\dim X+\ell},
		\end{align*}
		where the second equality follows from the fact that each $H_i$ is a general divisor, so in particular $\deg X = \deg (X \cap L)$. Now take $\ell=\dim(X)$. Then $X\cap L$ consists of $\deg(X)$ reduced points $p_1,\dots, p_{\deg(X)}$; by \cite[Example 4.3.4]{fulton2013intersection} we have that \begin{align*}
		\{s(X\cap L,Y\cap L)\}_{\dim X - \ell} & = \e_{p_1}(Y\cap L)[p_1] +\cdots +\e_{p_{\deg(X)}}(Y\cap L)[p_{\dim(X)}]\\
		&=\e_p(Y\cap L)\deg(X)[p]\\
		&=\e_p(Y\cap L) \cdot \deg X \cdot h^{n-\dim X+\ell}
		\end{align*} where $p$ is any point in $X\cap L$ (all of which are rationally equivalent). 	On the other hand, we also have
		\begin{align*}
		\{s(X,Y)\}_{\dim X} \cdot h^\ell  &=\e_XY\cdot [X] \cdot h^\ell \\
		&= \e_XY\cdot \deg X \cdot h^{n-\dim X+\ell},
		\end{align*} which forces $\e_{X\cap L}(Y\cap L)=\e_XY$ for $\ell<\dim(X)$ and $\e_{p}(Y\cap L)=\e_XY$ when $\ell=\dim(X)$ as desired.
	\end{proof}
	Having cut $X$ down to a single point $p$, we turn our attention to simplifying $Y$. The next result uses \eqref{eq:multlambda} to show that we can always replace $Y$ by a curve, i.e., a one-dimensional projective variety, when computing $\e_pY$.
	
	\begin{proposition}\label{prop:algmultcurve}
		Let $Y$ be a pure dimensional subscheme of the complex projective space $\PP^n$, let $p$ be any reduced point in $Y$, and consider a linear space $L \subset \pp^n$ given by the intersection of $m \geq 0$ general hyperplanes containing $p$. If $m \leq  \dim Y - 1$, then $\e_p(Y \cap L)$ is well-defined and equals $\e_{p}Y$.
	\end{proposition}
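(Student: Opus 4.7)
My plan is to apply the degree formula \eqref{eq:multlambdalong} twice --- once to the pair $p \subset Y$ and once to the pair $p \subset (Y \cap L)$ --- and to verify that the two resulting expressions for the multiplicity coincide. Since $p$ is a reduced point, its defining ideal is generated by $n$ linear forms, so I may take the degree $d=1$ throughout. With $\dim p = 0$ and $\deg p = 1$, and with $L_0 = \mathbb{P}^n$ since the codimension index is zero, the formula collapses to
\[
\e_p Y \;=\; \deg Y \;-\; \deg\bigl((Y \cap V) - p\bigr),
\]
where $V$ is the intersection of $\dim Y$ generic hyperplanes through $p$.

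Next I would note that, because the hyperplanes $H_1,\dots,H_m$ cutting out $L$ are general, $Y \cap L$ is pure of dimension $\dim Y - m \geq 1$, that $\deg(Y \cap L) = \deg Y$ by iterated B\'ezout, and that $p$ remains a reduced point of $Y \cap L$. Applying \eqref{eq:multlambdalong} again --- this time to $p \subset Y \cap L$ with $d=1$ --- produces
\[
\e_p(Y \cap L) \;=\; \deg Y \;-\; \deg\bigl((Y \cap L \cap V') - p\bigr),
\]
where $V'$ is the intersection of $\dim Y - m$ hyperplanes through $p$ chosen generically with respect to $L$. The composite $L \cap V'$ is then a transverse intersection of exactly $\dim Y$ generic hyperplanes through $p$, so it realizes a valid choice for the space $V$ appearing in the first displayed formula; since the residual degree is independent of the particular generic choice, the two subtracted terms agree and $\e_p(Y \cap L) = \e_p Y$ follows.

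The only nontrivial ingredient is this last genericity step: one must argue that the open conditions implicit in the phrase ``generic hyperplanes through $p$'' in \eqref{eq:multlambdalong} can be satisfied simultaneously by $V$ and by the composite $L \cap V'$. I would handle this by noting that the space of $\dim Y$-tuples of hyperplanes through $p$ is irreducible, that each necessary open condition carves out a nonempty Zariski-open subset, and hence that their intersection is nonempty --- so a valid generic $V$ of the form $L \cap V'$ always exists. This is the only point in the argument where the generality of $L$ enters, which is exactly what the statement of the proposition hypothesises.
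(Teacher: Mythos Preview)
Your proposal is correct and follows essentially the same approach as the paper's own proof. The only cosmetic difference is in the order of choices: the paper first fixes a general collection $\{P_1,\dots,P_{\dim Y}\}$ of hyperplanes through $p$ and then declares (without loss of generality, since $L$ is general) that $L$ is cut out by the first $m$ of them, whereas you take $L$ as given, choose $V'$ generically, and argue that $L\cap V'$ is a valid instance of $V$; both routes amount to the same genericity observation.
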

	\begin{proof}
		The generating ideal of $p$ in the polynomial ring $\CC[x_0,\dots,x_n]$ can be chosen to consist of $n$ linear forms $\{\ell_1,\ldots,\ell_n\}$. It follows that $d=1$ and $\deg X=1$ in \eqref{eq:multlambdalong}. Since $\dim p = 0$, we have
		\[
		\e_{p}Y=\deg(Y)-\deg((Y\cap V_{\dim Y})-p), 
		\] where $V_{\dim Y}$ is the variety defined by the polynomials $\{P_1,\ldots,P_{\dim Y}\}$, with each $P_j$ being a linear combination of the form \[P_j = \sum_{i=1}^n \lambda^j_i \ell_i \text{ for general } \lambda^j_i\in \CC.\] Without loss of generality, we may take $L$ to be the variety defined by the first $m$ of these, say $\{P_1,\dots, P_m\}$. Thus, $L$ is a linear system with base locus $p$, so it forms a smooth complete intersection outside of $p$. Hence the intersection $Y\cap L$ is transverse in the expected dimension, i.e.~in dimension $\dim (Y\cap L) = \dim Y - m > 0$ and moreover, $\deg(Y\cap L)=\deg(Y)$. Letting $V_{> m}$ be the variety defined by $\{P_{m+1},\ldots P_{\dim Y}\}$, we have
		\begin{align*}
		\e_{p}Y&=\deg(Y)-\deg((Y\cap V_{\dim Y})-p) \\&=\deg(Y\cap L)-\deg((Y\cap L \cap V_{> m})-p)  \\
		&=\e_{p}(Y\cap L).
		\end{align*} This argument fails whenever $m=\dim Y$, since the intersection $Y\cap L$ may not be transverse in this case. 
	\end{proof}
	
	The following theorem serves to summarize the main results in this section by combining Propositions \ref{prop:algmultsection}  and \ref{prop:algmultcurve}. 
	
	\begin{theorem}\label{thm:algmultslice}
		Let $X \subset Y$ be a pair of complex projective subvarieties of $\pp^n$, and let $L \subset \pp^n$ be a linear space given by the intersection of $k \geq 0$ general hyperplanes $H_1, \ldots, H_k$ containing a point $p$ of $X$.
		\begin{enumerate}
			\item If $k \leq \dim X$, then $\e_XY = \e_{X \cap L}(Y \cap L)$.
			\item If $k = \dim X$, then $\e_p(Y \cap L) = \e_XY$.
			\item If $\dim X < k \leq \dim Y - 1$, then $\e_XY = \e_p(Y \cap L)$.
		\end{enumerate}
	\end{theorem}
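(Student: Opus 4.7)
The plan is to derive Theorem~\ref{thm:algmultslice} by stitching together Propositions~\ref{prop:algmultsection} and~\ref{prop:algmultcurve}. Assertions (1) and (2) are essentially immediate from Proposition~\ref{prop:algmultsection}: in that statement, the word ``generic'' for the hyperplanes $H_i$ can be relaxed to ``generic among hyperplanes through $p$'' provided $p$ is a generic point of $X$, since the Segre-class identity \[\{s(X\cap L,Y\cap L)\}_{\dim X - k} = \{s(X,Y)\}_{\dim X}\cdot h^k\] only requires that the $H_i$ meet $X$ in the expected codimension near $p$, and the linear system of hyperplanes through a generic point of $X$ is large enough to achieve this. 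So I would first observe that the arguments given for Proposition~\ref{prop:algmultsection} go through verbatim after this mild weakening of genericity, producing (1) and (2).

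The substance of the theorem is in assertion (3), which I would prove by peeling off the $k$ hyperplanes in two groups. Set $L_0 = H_1 \cap H_2 \cap \cdots \cap H_{\dim X}$, so that $L = L_0 \cap H_{\dim X + 1} \cap \cdots \cap H_k$. Applying assertion (2) to the first $\dim X$ hyperplanes yields
\[
\e_X Y \;=\; \e_p(Y \cap L_0).
\]
The remaining $m := k - \dim X$ hyperplanes are general hyperplanes through $p$, and the scheme $Y \cap L_0$ is pure of dimension $\dim Y - \dim X$ by the transversality built into the proof of Proposition~\ref{prop:algmultsection}. The hypothesis $k \le \dim Y - 1$ translates exactly to $m \le \dim(Y\cap L_0) - 1$, which is the constraint needed to invoke Proposition~\ref{prop:algmultcurve} for the pair $(p, Y\cap L_0)$ sliced by $H_{\dim X + 1},\ldots,H_k$. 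That proposition then delivers
\[
\e_p(Y \cap L_0) \;=\; \e_p(Y \cap L_0 \cap H_{\dim X+1} \cap \cdots \cap H_k) \;=\; \e_p(Y \cap L),
\]
and assertion (3) follows by chaining these two equalities.

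The only genuine obstacle is bookkeeping around genericity: one must check that the splitting of $H_1,\ldots,H_k$ into the two groups preserves the genericity hypotheses used in the two propositions, and that ``general hyperplanes through $p$'' is a sufficiently rich condition to activate both the Segre-class identity (with $p \in X$ a generic point) and the degree-formula argument of \cite[Theorem~5.3]{HH19} underlying Proposition~\ref{prop:algmultcurve}. Since both genericity conditions cut out nonempty Zariski open subsets of the relevant linear systems of hyperplanes through $p$, a generic choice of $H_1,\ldots,H_k$ through $p$ lies in the intersection of these open sets, so the two-step reduction is valid.
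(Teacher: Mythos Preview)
Your proposal is correct and follows essentially the same route as the paper: the paper's proof of Theorem~\ref{thm:algmultslice} consists of the single sentence ``combining Propositions~\ref{prop:algmultsection} and~\ref{prop:algmultcurve},'' and your two-step reduction for assertion (3)---first cutting $X$ down to $p$ via Proposition~\ref{prop:algmultsection}, then cutting $Y\cap L_0$ down to a curve via Proposition~\ref{prop:algmultcurve}---is precisely how this combination is meant to be read. Your discussion of the genericity mismatch (generic hyperplanes in Proposition~\ref{prop:algmultsection} versus general hyperplanes through $p$ in the theorem) is a useful elaboration that the paper leaves implicit; it is indeed handled by taking $p$ to be a generic point of $X$, as the informal statement in the Introduction indicates.
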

	
	\noindent This is Theorem ({\bf A}) from the Introduction. We note that we may allow $Y$ in the statement above to be any pure dimensional subscheme of $\pp^n$, but have restricted the case where $Y$ is a variety as this will be the only case we employ in later sections. Assertion (3) of this result (for $k = \dim Y - 1$) implies that the evaluation of $\e_XY$ for arbitrary projective varieties $X \subset Y$ in $\pp^n$ can be reduced to the computation of $\e_pC$ where $p$ is a point lying on the curve $C = Y \cap L$; this scenario will be the central focus of the next section.
	
	\section{Point-Curve Multiplicities via Complex Links}\label{sec:pointcurve}
	
	Our goal here is to provide a stratified Morse-theoretic proof of  Theorem ({\bf B}) from the Introduction. 
	\begin{theorem}\label{thm:toplink}
		If $p$ is any (possibly singular) point on a curve $C \subset \pp^n$, then we have
		\[
		\e_pC = \chi(\Clk_p),
		\]
		where $\e_pC$ is the Hilbert-Samuel multiplicity (from Definition \ref{def:algmult}) and $\chi(\Clk_p)$ is the Euler characteristic of $p$'s complex link in $C$ (from Definition \ref{def:clink}).
	\end{theorem}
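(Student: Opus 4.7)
The plan is to combine the algebraic degree formula
\[
\e_pC = \deg(C) - \deg((C \cap H) - p)
\]
recorded after Theorem (\textbf{A}) with a geometric decomposition of a generic nearby intersection $C \cap H'$. For $H$ generic through $p$, each point of $(C \cap H) - p$ is a transverse intersection point, so $\deg((C \cap H) - p)$ coincides with its cardinality; Bezout gives $\#(C \cap H') = \deg(C)$ for a generic parallel hyperplane $H'$. I would split $C \cap H'$ along the sphere $\partial \BB_\epsilon(p)$ into the local part $\Clk_p = C \cap H' \cap \BB_\epsilon(p)$ and a global part $C \cap H' \smallsetminus \BB_\epsilon(p)^{\circ}$, and then use Thom's first isotopy lemma to identify the cardinality of the global part with $\deg((C \cap H) - p)$. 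Subtraction then gives $\#\Clk_p = \e_pC$, and $\chi(\Clk_p) = \#\Clk_p$ because $\Clk_p$ is zero-dimensional.

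To set up the Thom isotopy input, pass to an affine chart containing $p$, let $v \in \CC^n$ be the complex normal to $H$, and consider the holomorphic function $f : C \to \CC$ given by $f(z) = \langle z - p, v \rangle$, so that $H = f^{-1}(0)$ and the nearby parallel hyperplanes are the complex level sets $H_c = f^{-1}(c)$. Choose $\epsilon > 0$ small enough that $C \cap \BB_\epsilon(p) \smallsetminus \{p\}$ is smooth, $\partial \BB_\epsilon(p)$ meets $C$ transversely, and every point of $(C \cap H) - p$ lies outside $\BB_\epsilon(p)$; pick $H$ generic enough to meet $C$ transversely at each such far point; and pick $\delta > 0$ so that $f$ has no critical values in the open disk $\{|c| < \delta\}$ apart from possibly $0$. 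The restriction of $f$ to the compact stratified subspace $C \smallsetminus \BB_\epsilon(p)^{\circ}$ is then a proper stratified submersion over this disk, and Thom's first isotopy lemma promotes it to a locally trivial stratified fibration over $\{|c| < \delta\}$. Consequently the Euler characteristic of the fiber $f^{-1}(c) \smallsetminus \BB_\epsilon(p)^{\circ}$ is constant in $c$; evaluating at $c = 0$ yields $\deg((C \cap H) - p)$, while evaluating at a generic nonzero $c$ in the disk gives $\deg(C) - \#\Clk_p$. Equating these two counts completes the proof.

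The principal obstacle is coordinating the genericity choices of $v$, $\epsilon$, and $\delta$ so that the hypotheses of Thom's first isotopy lemma hold simultaneously: the function $f$ typically has additional critical values coming from the smooth locus of $C$, so $\delta$ must be small enough to exclude them, while $\epsilon$ must be small enough to isolate the singularity at $p$ and preserve the transversality of $\partial \BB_\epsilon(p)$ with respect to both $C$ and the level sets $H_c$. Once these are aligned, the homological content of the argument --- the invariance of Euler characteristic under the locally trivial fibration produced by Thom isotopy --- is precisely what rules out intersection points of $C \cap H_c$ \emph{leaking} across $\partial \BB_\epsilon(p)$ as $c$ moves away from zero.
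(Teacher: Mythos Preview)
Your overall strategy --- split $C \cap H_c$ along $\partial\BB_\epsilon(p)$ and show the outside count is constant in $c$ --- is sound and differs from the paper's route, which never excises a ball but instead runs a homological zigzag (Proposition~\ref{prop:zerodelta} and Lemma~\ref{lem:local}) on the full curve to localise the difference of global counts at $p$. However, your invocation of Thom's first isotopy lemma on $f: C \smallsetminus \BB_\epsilon(p)^\circ \to \CC$ fails as written. The boundary stratum $C \cap \partial\BB_\epsilon(p)$ has real dimension one while the target $\CC$ has real dimension two, and no map from a one-manifold to a two-manifold is a submersion; so the hypothesis of Lemma~\ref{lem:thom} is violated on that stratum and the lemma does not apply. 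The transversality conditions you list for $\partial\BB_\epsilon(p)$ do not rescue this --- they concern ambient intersections, not the rank of $df$ restricted to the boundary circles. A secondary issue: in the affine chart $C$ is not compact, so your source is not the ``compact stratified subspace'' you claim; what you actually need is properness of $f$ over the disk, which requires the further genericity condition on $v$ that the pencil $\{H_c\}$ avoids the points of $C$ at infinity.

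Both issues are repairable within your framework. Since your choice of $\epsilon$ already gives $C \cap H \cap \partial\BB_\epsilon(p) = \varnothing$, continuity and compactness of $C \cap \partial\BB_\epsilon(p)$ let you shrink $\delta$ so that $f^{-1}(\{|c|<\delta\})$ misses the boundary sphere entirely. Then work on the \emph{open} complement $C \smallsetminus \overline{\BB_\epsilon(p)}$ instead: there is no boundary stratum, the remaining zero-dimensional strata (far singular points of $C$) lie outside $f^{-1}(\{|c|<\delta\})$ for generic $H$, and $f$ is a proper submersion over the disk by the genericity of $v$ just arranged. Thom isotopy now applies cleanly, the outside fibre cardinality is constant in $c$, and your subtraction goes through.
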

	Since $p$ can be defined as the zero set of $n$ linear polynomials, by \eqref{eq:lambdadeg} we have 
	\[
	\e_pC = \LC^0_pC = \deg(C) - \deg((C \cap H_p) - p), 
	\] where $H_p$ is a generic hyperplane in $\pp^n$ passing through $p$. Let $\xi$ be the unit normal to $H_p$, and denote by $\pi_\xi:\pp^n \to \RR$ the projection map $z \mapsto \text{Re}\ip{z-p,\xi}$ so that the level set of $\pi_\xi$ at $0$ is precisely $H_p$. By our genericity assumption on $H_p$, the vector $\xi$ is nondegenerate in the sense of Definition \ref{def:xi}. Thus, there is some small positive $\delta$ so that $\pi_\xi$ is a stratified Morse function on $C$ with no critical point other than $p$ taking values in $[-\delta,\delta]$. In particular, this means that no singular points of $C$ other than $p$ are allowed to lie in $C \cap \pi_\xi^{-1}[0,\delta]$, which is the part of $C$ lying within the shaded rectangular region below\footnote{Here we have resorted to drawing a real picture for simplicity; the one-dimensional complex curve $C$ should in fact have $\RR$-dimension two.}.
	
	\begin{center}
		\includegraphics[scale=.55]{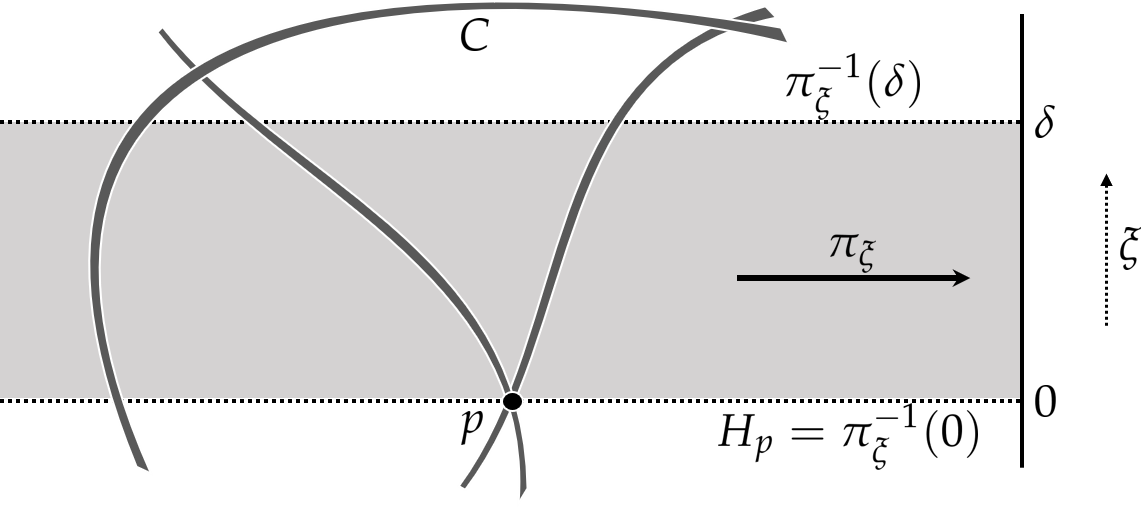}
	\end{center}
	
	The upper levelset $\pi_\xi^{-1}(\delta)$ intersects $C$ in a set of cardinality $\deg C$ since $\pi_\xi^{-1}(\delta)$ is sufficiently generic. On the other hand, since the lower levelset $\pi_\xi^{-1}(0)$ is forced to pass through $p$, it may intersect $C$ in fewer points. Thus, the quantity of interest to us here is
	\begin{align}
	\LC^0_pC = \#\set{C \cap \pi_\xi^{-1}(\delta)} - \#\set{C \cap \pi_\xi^{-1}(0)} + 1, \label{eq:lambda0}
	\end{align}
	where the last $+1$ term comes from the fact that we are required to discard $p$ from the second intersection. The main tool in our argument here is one of Thom's celebrated Isotopy Lemmas --- see \cite[Proposition 11.1]{Mather2012} or \cite[Chapter I.1.5]{SMTbook}. 
	
	\begin{lemma}\label{lem:thom}{\bf [Thom's first isotopy lemma]}
		Let $M$ and $N$ be smooth manifolds and $Z \subset M$ a Whitney stratified subset. If $f:M \to N$ is a smooth proper map whose restriction $f|_X$ to each stratum $X \subset Z$ is a submersion (i.e., the derivative $df_p:T_pX \to T_{f(p)}N$ is surjective for all $p$ in $X$), then $f|_X:X \to f(X)$ is a (locally trivial) fiber bundle.
	\end{lemma}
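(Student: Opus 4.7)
The plan is to follow the standard argument due to Thom and Mather, which reduces the global triviality question to a local one over each target point and then constructs the local trivialization by integrating a carefully chosen flow on $Z$. Since local triviality can be checked in a neighbourhood of any $y_0 \in f(Z)$, I would first pass to a chart of $N$ around $y_0$, identifying it with an open subset $U \subset \RR^k$ where $k = \dim N$. The goal then becomes producing a stratification-preserving homeomorphism $f^{-1}(U) \cap Z \cong (f^{-1}(y_0) \cap Z) \times U$ commuting with $f$, since this will exhibit $f|_X$ as locally trivial over $f(X) \cap U$ for each stratum $X$.

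Next, I would equip $Z$ with Mather's \emph{control data}: a system of tubular neighbourhoods $T_X \subset M$ of each stratum $X$, equipped with smooth projections $\pi_X : T_X \to X$ and nonnegative tubular distance functions $\rho_X : T_X \to \RR_{\geq 0}$ vanishing exactly on $X$, chosen so that the compatibility identities $\pi_X \circ \pi_Y = \pi_X$ and $\rho_X \circ \pi_Y = \rho_X$ hold (wherever defined) for every pair of strata $X < Y$. The existence of such control data on any Whitney stratified set is itself a nontrivial fact, but it is classical and I would invoke it as a black box. Using the submersion hypothesis stratum by stratum, each coordinate vector field $\partial/\partial y_i$ on $U$ can be lifted to a smooth vector field $v_i^X$ on the open stratum $X \cap f^{-1}(U)$ with $df(v_i^X) = \partial/\partial y_i$. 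The crux is to adjust these lifts so that the resulting collection $\{v_i^X\}_X$ assembles into a \emph{controlled stratified vector field} $v_i$ on $Z \cap f^{-1}(U)$; this means that near any lower stratum $X$, the lift $v_i^Y$ on an adjacent higher stratum $Y > X$ is compatible with the projection $\pi_X$ and distance $\rho_X$, so that trajectories do not escape to infinity or leak between strata.

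The construction of these controlled lifts is where Whitney's Condition (B) enters essentially: the standard inductive procedure (build on the lowest-dimensional stratum first, then extend outward using partitions of unity in a tubular neighbourhood) produces vector fields satisfying $d\pi_X(v_i^Y) = v_i^X \circ \pi_X$ and $d\rho_X(v_i^Y) = 0$, but these identities only hold approximately off $X$ and the limiting behaviour as one approaches $X$ is governed by Whitney (B). This is the step I expect to be the main obstacle: verifying that the limits of tangent planes along sequences $q_i \in Y$ with $q_i \to q \in X$ contain $T_q X$ is precisely what forces the controlled lifts to extend continuously across strata. Once the $v_i$ are in hand, one integrates the flow; properness of $f$ together with the controlled nature of the $v_i$ ensures that the flow is defined for all time in a neighbourhood of $f^{-1}(y_0) \cap Z$, and the joint flow $\Phi(y, q) := \Phi_{v_k}^{y_k} \circ \cdots \circ \Phi_{v_1}^{y_1}(q)$ produces the desired stratified homeomorphism. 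Restricting $\Phi$ to any single stratum $X$ yields the local trivialization of $f|_X$ over $f(X) \cap U$, completing the proof.
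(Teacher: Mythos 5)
The paper does not prove this lemma at all: it is quoted as a classical result, with the proof deferred to the cited references (Mather's notes, Proposition 11.1, and Goresky--MacPherson, Chapter I.1.5). Your sketch is precisely the standard Thom--Mather argument that those references carry out — localize over a chart in $N$, choose control data $(\pi_X,\rho_X)$ on the Whitney stratified set, lift the coordinate vector fields stratum by stratum to controlled stratified vector fields, and integrate, using properness to get completeness of the flows — so there is no divergence of approach to report, only that you are reconstructing the black box the paper invokes. Two cautions if you intend to flesh this out: first, in Mather's construction the control conditions $d\pi_X(v_i^Y)=v_i^X\circ\pi_X$ and $d\rho_X(v_i^Y)=0$ are arranged to hold \emph{exactly} on $T_X\cap Y$ (not merely approximately), by averaging local lifts with a partition of unity inside the tube; second, the controlled lifts are generally \emph{not} continuous as one approaches the lower stratum, and one should not try to make them so — the content of the argument is that the flows they generate are nonetheless continuous and stratification-preserving, which is where Condition (B) (through the existence of compatible control data and the local structure of the tubes) is really used. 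With those adjustments your outline matches the classical proof; the genuinely hard inputs (existence of control data, the lifting lemma for controlled vector fields, and continuity of the resulting flows) are exactly the parts you have flagged as black boxes, and they are nontrivial enough that citing Mather for them, as the paper does, is the reasonable course.
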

	
	By our choice of $\delta$, the function $\pi_\xi$ when restricted to $C \cap \pi_\xi^{-1}(0,\delta)$ satisfies the hypotheses of this lemma. Reducing $\delta$ further if necessary, we are therefore guaranteed the existence of a local trivialization, i.e., a homeomorphism 
	\[
	\left[C \cap \pi^{-1}_\xi(0,\delta)\right] \simeq (0,\delta) \times \left[C \cap \pi_\xi^{-1}(\delta)\right].
	\] Our strategy here is to examine the following zigzag diagram of inclusion maps
	\begin{align}
	C \cap \pi_\xi^{-1}(0) \hookrightarrow C \cap \pi_\xi^{-1}[0,\delta] \hookleftarrow C \cap  \pi_\xi^{-1}(\delta).\label{eq:zz}
	\end{align} This next result is concerned with the first inclusion. 
	
	\begin{proposition}\label{prop:zerodelta}
		The inclusion $C \cap \pi_\xi^{-1}(0) \hookrightarrow C \cap \pi_\xi^{-1}[0,\delta]$ is a homotopy equivalence, and in particular it admits a homotopy-inverse $\phi:C \cap \pi_\xi^{-1}[0,\delta] \to C \cap \pi_\xi^{-1}(0)$.
	\end{proposition}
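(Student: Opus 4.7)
The plan is to construct an explicit strong deformation retract $H \colon [0,1] \times C \cap \pi_\xi^{-1}[0,\delta] \to C \cap \pi_\xi^{-1}[0,\delta]$ of the slab onto the critical level set $C \cap \pi_\xi^{-1}(0)$, so that the terminal map $H(1,\cdot)$ itself serves as the desired homotopy inverse $\phi$. The construction proceeds in three stages.

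First, I would apply Thom's first isotopy lemma (Lemma~\ref{lem:thom}) to the restriction of $\pi_\xi$ to the open slab $U := C \cap \pi_\xi^{-1}(0,\delta]$. Since $\delta$ was chosen so that $p$ is the only critical point of $\pi_\xi$ on $C$ in the level range $[-\delta, \delta]$, this restriction is a proper stratified submersion onto $(0,\delta]$. The lemma therefore yields a stratum-preserving homeomorphism
\[
\Phi \colon U \xrightarrow{\sim} (0,\delta] \times F, \qquad F := C \cap \pi_\xi^{-1}(\delta),
\]
intertwining $\pi_\xi$ with projection to the first coordinate.

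Second, I would use $\Phi$ to build a sliding homotopy. For $q \in U$ with $\Phi(q) = (t,f)$ and $s \in [0,1)$, set $H(s, q) := \Phi^{-1}\bigl((1-s)t,\, f\bigr)$; for $q \in C \cap \pi_\xi^{-1}(0)$ and any $s$, set $H(s, q) := q$; and for $q \in U$ at $s = 1$, define $H(1,q)$ as the limit of $H(\tau, q)$ as $\tau \to 1^-$. Continuity on $[0,1) \times C \cap \pi_\xi^{-1}[0,\delta]$ is immediate from these formulas, so the main task is to show that the $\tau \to 1^-$ limit exists, lies on the critical level, and depends continuously on $q$ even at the singular point $p$.

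Third, I would verify these analytic properties. Compactness of $C \cap \pi_\xi^{-1}[0,\delta]$ and continuity of $\pi_\xi$ guarantee that every trajectory $\tau \mapsto \Phi^{-1}((1-\tau)t, f)$ has a nonempty accumulation set, necessarily contained in $C \cap \pi_\xi^{-1}(0)$. Selecting $\Phi$ as the flow of a stratified controlled vector field --- which is always available in Mather's proof of Thom's lemma --- confines each trajectory to a single real-analytic stratum of $C$, whereupon the curve-selection lemma for subanalytic sets promotes accumulation to a single limit point. Continuity of $H(1,\cdot)$ at level-zero points away from $p$ is then immediate from the local smoothness there, while continuity at $p$ follows from the local conic structure of the Whitney stratified set $C$ around $p$: any small enough neighborhood of $p$ in $C$ admits a stratum-preserving homeomorphism to a cone on its boundary sphere, so trajectories starting near $p$ remain in small cones around $p$ and their endpoints cluster at $p$. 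The principal obstacle is precisely this continuity at $p$; once it is settled, $\phi := H(1,\cdot)$ is a continuous retract onto $C \cap \pi_\xi^{-1}(0)$, and $H$ itself witnesses that $\iota \circ \phi \simeq \mathrm{id}$ (with $\iota$ the inclusion), giving the required homotopy equivalence.
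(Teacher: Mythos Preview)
Your approach is essentially the paper's: both invoke Thom's first isotopy lemma on the open slab $C\cap\pi_\xi^{-1}(0,\delta]$ and then push points down to level zero along the resulting flow/trivialization, with $\phi$ being the terminal map. The paper's argument is in fact much terser than yours---it lifts the constant field $-\xi$ on $(0,\delta]$ to a controlled vector field $V$ on the slab and simply declares $\phi$ to be obtained by flowing along $V$, without discussing the continuity at level zero that you take pains over in your third stage. One minor quibble: the curve selection lemma does not by itself promote accumulation to a \emph{unique} limit point; but in this setting it suffices to observe that $C\cap\pi_\xi^{-1}(0)$ is a finite set (a generic hyperplane section of a curve), so a connected trajectory can accumulate on only one of its points, and your local conic structure argument then handles continuity at $p$.
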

	\begin{proof}
		By Thom's first Isotopy Lemma applied to the restriction of $\pi_\xi$ to $C \cap \pi_\xi^{-1}(0,\delta]$, the constant vector field $-\xi$ on $(0,\delta]$ lifts to a vector field $V$ on $C \cap \pi_\xi^{-1}(0,\delta]$ so that the differential $d\pi_\xi$ sends each vector of $V$ to $-\xi$, as depicted below:
		
		\begin{center}
			\includegraphics[scale=.55]{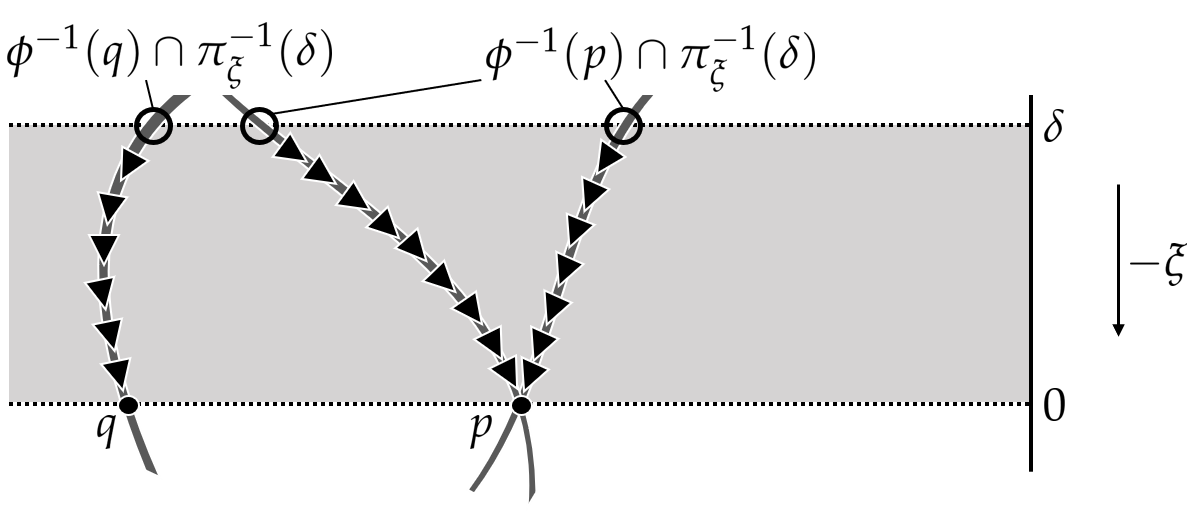}
		\end{center}
		\noindent The desired map $\phi$ is obtained by flowing along the integral curves of $V$.
	\end{proof}
	
	The second map from our zigzag (\ref{eq:zz}) will be described via the corresponding relative homology group, namely
	\[
	\HG_\bullet\left(C \cap \pi_\xi^{-1}[0,\delta],C \cap \pi_\xi^{-1}(\delta)\right),
	\] where we have implicitly assumed rational coefficients throughout. The following result shows that this group only depends on local data pertaining to the fibers of $\phi$ over $p$.
	
	\begin{lemma}\label{lem:local}
		Let $\phi:C \cap \pi_\xi^{-1}[0,\delta] \to C \cap \pi_\xi^{-1}(0)$ be a homotopy inverse to the inclusion (as from Proposition \ref{prop:zerodelta}). There is an isomorphism of relative homology groups:
		\[
		\HG_\bullet\left(C \cap \pi_\xi^{-1}[0,\delta],C \cap \pi_\xi^{-1}(\delta)\right) \simeq  \HG_\bullet\left(\phi^{-1}(p),\phi^{-1}(p)\cap\pi_\xi^{-1}(\delta)\right).
		\]
		Consequently, the associated Euler characteristics satisfy
		\[
		\chi\left(C \cap \pi_\xi^{-1}[0,\delta]\right) - \chi\left(C \cap \pi_\xi^{-1}(\delta)\right) = 1-\chi\left(\phi^{-1}(p) \cap \pi_\xi^{-1}(\delta)\right).
		\]
	\end{lemma}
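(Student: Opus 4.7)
Denote $Y = C \cap \pi_\xi^{-1}[0,\delta]$, $A = C \cap \pi_\xi^{-1}(\delta)$, $F = \phi^{-1}(p)$, and $L = F \cap A$. The plan is to localize the relative homology $H_\bullet(Y, A)$ at the fiber $F$ over $p$, using Thom's first isotopy lemma to ensure that contributions away from $p$ vanish. The second assertion will then follow from the first via standard additivity of Euler characteristics and the contractibility of $F$.

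First I would exploit the structure afforded by Thom's lemma. Since $p$ is the unique critical point of $\pi_\xi$ on $Y$, the restriction $\pi_\xi : Y \setminus \{p\} \to [0,\delta]$ is a proper stratified submersion. The retraction $\phi$ from Proposition \ref{prop:zerodelta} will restrict on $\phi^{-1}(B)$, where $B = C \cap \pi_\xi^{-1}(0) \setminus \{p\}$, to a locally trivial fibration with interval fibers; consequently $\phi^{-1}(B)$ deformation retracts onto $A \setminus L$ within itself while fixing the boundary. Meanwhile, $F$ is the mapping cylinder of the constant map $L \to \{p\}$, hence homeomorphic to the cone $\text{Cone}(L)$, which is contractible.

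Next, to obtain the homology isomorphism I will consider the long exact sequence of the triple $(Y, F \cup A, A)$ and show that $H_n(Y, F \cup A) = 0$. The key point is that $Y \setminus (F \cup A)$ coincides with the open cylinder $\phi^{-1}(B) \setminus (A \setminus L)$, and pushing its fibers outward toward $A \setminus L$ yields a deformation of the pair $(Y, F \cup A)$ into the trivial pair $(F \cup A, F \cup A)$. Hence $H_\bullet(Y, A) \simeq H_\bullet(F \cup A, A)$. A further excision step, using that $F$ and $A \setminus L$ are disjoint and that $F \cap A = L$, will reduce $H_\bullet(F \cup A, A)$ to $H_\bullet(F, L)$, as required.

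Finally, the Euler characteristic identity is immediate from the relative homology isomorphism together with the contractibility of $F$: $\chi(F, L) = \chi(F) - \chi(L) = 1 - \chi(L)$, and combining with the standard relation $\chi(Y, A) = \chi(Y) - \chi(A)$ produces the stated formula. The hardest part will be the vanishing of $H_n(Y, F \cup A)$: a naive deformation retract of $Y$ onto $F \cup A$ is not continuous at $p$, because flow lines from distinct points of $L$ fan out as they approach the critical point. Circumventing this requires working only with the cylinder structure over $B$ provided by Thom's lemma, and performing the relative deformation on the open complement $\phi^{-1}(B) \setminus (A \setminus L)$ rather than attempting a global retraction that crosses $F$.
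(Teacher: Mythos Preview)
Your argument is correct, but it is considerably more elaborate than what the situation demands, and the paper's proof exploits a simplification you overlooked. Because $C$ is a curve, the base $C\cap\pi_\xi^{-1}(0)$ is a \emph{finite} set of points; hence the fibres $\phi^{-1}(q)$ are pairwise disjoint \emph{open} subsets of $Y$, and $Y=\coprod_q\phi^{-1}(q)$ is a genuine topological disjoint union. The paper simply applies additivity of homology to this decomposition, obtaining
\[
\HG_\bullet(Y,A)\;\simeq\;\bigoplus_q \HG_\bullet\big(\phi^{-1}(q),\,\phi^{-1}(q)\cap A\big),
\]
and then kills each $q\neq p$ summand by identifying the pair with $([0,\delta],\{\delta\})$ via Thom's isotopy lemma. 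No triple, no excision, no worry about continuity at $p$.

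Your long-exact-sequence-of-a-triple route and the subsequent excision both work, but note that the step you flag as ``hardest''---the vanishing of $\HG_\bullet(Y,F\cup A)$---is in fact trivial here for exactly the reason above: $F=\phi^{-1}(p)$ is clopen in $Y$, so the pair $(Y,F\cup A)$ splits as $(F,F)\sqcup(\phi^{-1}(B),A\setminus L)$, and the feared discontinuity at $p$ never arises because no flow line from $\phi^{-1}(B)$ approaches $F$. Your framework would genuinely earn its keep if the zero level set were positive-dimensional (so that $F$ is not open), but in the present curve setting it is overkill. The Euler-characteristic deduction is handled identically in both proofs.
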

	\begin{proof}
		The set $C \cap \pi_\xi^{-1}[0,\delta]$ decomposes as a disjoint union
		\[
		C \cap \pi_\xi^{-1}[0,\delta] = \coprod_q \phi^{-1}(q),
		\]
		where $q$ ranges over the points in $C \cap \pi_\xi^{-1}(0)$. By the additivity of homology, we have a direct sum decomposition
		\[
		\HG_\bullet\left(C \cap \pi_\xi^{-1}[0,\delta],C \cap \pi_\xi^{-1}(\delta)\right) = \bigoplus_q \HG_\bullet\left(\phi^{-1}(q),\phi^{-1}(q)\cap\pi_\xi^{-1}(\delta)\right).
		\]
		It therefore suffices to show that the summands corresponding to $q \neq p$ are all trivial. Since no such $q$ is a critical point of $\pi_\xi$, the vector field $V$ on $C \cap \pi_\xi^{-1}(0,\delta]$ which was used to construct $\phi$ in the proof of Proposition \ref{prop:zerodelta} extends non-trivially through $q$. Now by Thom's Isotopy Lemma \ref{lem:thom} above, the stratified homeomorphism type of $\phi^{-1}(q) \cap \pi_\xi^{-1}(t)$ remains unchanged across all $t \in [0,\delta]$, so in particular there is a homeomorphism of pairs 
		\[
		\Big(\phi^{-1}(q), \phi^{-1}(q) \cap \pi_\xi^{-1}(\delta)\Big) \simeq \Big([0,\delta],\delta\Big),
		\] and hence the relative homology is trivial as desired. To extract the statement about the Euler characteristics from the statement about relative homology groups, one uses the observation that $\phi^{-1}(p)$ is homeomorphic to the cone at $p$ over $\phi^{-1}(p) \cap \pi_\xi^{-1}(\delta)$. Since all cones are contractible, we obtain $\chi(\phi^{-1}(p)) = 1$.
	\end{proof}
	
	To conclude our proof of Theorem \ref{thm:toplink}, we observe that
	\begin{align*}
	\e_pC &=  \#\set{C \cap \pi_\xi^{-1}(\delta)} - \#\set{C \cap \pi_\xi^{-1}(0)} + 1, & \text{by }(\ref{eq:lambda0}) \\
	&= \chi\left(C \cap \pi_\xi^{-1}(\delta)\right) - \chi\left(C \cap \pi_\xi^{-1}(0)\right) + 1 & \text{since $\dim C = 1$} \\
	&= \chi\left(C \cap \pi_\xi^{-1}(\delta)\right) - \chi\left(C \cap \pi_\xi^{-1}[0,\delta]\right) + 1 & \text{by Proposition } \ref{prop:zerodelta} \\
	&= \chi\left(\phi^{-1}(p) \cap \pi_\xi^{-1}(\delta)\right) & \text{by Lemma } \ref{lem:local} \\
	&= \chi(\Clk_pC) & \text{by Definition }\ref{def:clink}
	\end{align*} 
	as desired.
	
	\begin{remark} \label{rem:topargcodim}
		Neither Proposition \ref{prop:zerodelta} nor Lemma \ref{lem:local} require any constraint on the dimension of $C$, and both would work just as well when $\dim C > 1$. On the other hand, it is only when $\dim C = 1$ that one obtains $\dim (C \cap \pi_\xi^{-1}(t)) = 0$ for $t$ in $[0,\delta]$, and it is a miracle of zero-dimensionality that degree and Euler characteristic coincide. This accident is exploited only once in our argument, i.e., when transitioning from the first line to the second one in the string of equalities above. 
	\end{remark}
	
	Unfortunately, we do not anticipate any direct relationship between degrees and Euler characteristics of higher-dimensional projective  varieties. Thus, this argument does not extend directly to the scenario where our curve $C$ is replaced by a variety $Y$ of dimension $> 1$. In any event, Theorems \ref{thm:algmultslice} and \ref{thm:toplink} guarantee that all Hilbert-Samuel multiplicity computations can be reduced to Euler characteristic estimation for a finite collection of points. We now turn our attention to inferring such multiplicities from point samples. 
	
	\section{Estimating Multiplicities from Finite Samples}
	\label{section:Multiplcity_From_Finite_Samples}
	The {\em reach} $\tau_M > 0$ of a compact submanifold $M \subset \RR^n$ is the smallest radius $r > 0$ for which the radius-$r$ normal bundle around $M$ self-intersects. This notion was first introduced by Federer in \cite{federer}, and it serves as an important measure of the regularity of the embedding $M \hookrightarrow \RR^n$. The reciprocal $1/\tau_M$, called the {\em condition number} of $M$, features prominently in the homological inference results of Niyogi, Smale and Weinberger from \cite{niyogietal}. These results have been extended to the case where $M$ has a smooth boundary $\partial M$ by Wang and Wang \cite{wang2}. In this setting, the role of the reach is played by a new parameter
	\begin{align}\label{eq:Dwang}
	\Delta_M := \min\set{\tau_M, \tau_{\partial M}, \rho_M},
	\end{align}
	where $\rho_M$ is the largest radius $r > 0$ so that at each point $x$ in $X$, the exponential map $M \to T_xM$ is a diffeomorphism onto its image when restricted to the open ball $\BB_r(x)^\circ \cap M$. The following result is \cite[Theorem 3.3]{wang2}.
	
	\begin{theorem} \label{thm:wangdense}
		Let $M$ be a smooth, nonempty $k$-dimensional submanifold with boundary of $\RR^n$. For any radius $r \in (0,\Delta_M/2)$ and probability $\gamma \in (0,1)$, there exists an explicit bound $N_M(r,\gamma)$ satisfying the following property. Any uniformly sampled\footnote{i.e., independent and identically distributed with respect to the uniform measure on $M$.} finite set $S \subset M$ of cardinality larger than $N_M(r,\gamma)$ is $(r/2)$-dense in $M$ with probability exceeding $(1-\gamma)$.
	\end{theorem}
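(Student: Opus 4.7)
The plan is to execute a standard geometric covering/coupon-collector argument of the type pioneered in \cite{niyogietal}, adapted to the boundary setting. First I would fix a maximal $(r/4)$-separated subset $\{c_1,\ldots,c_L\} \subset M$; by maximality this is automatically an $(r/4)$-net, so every $x \in M$ lies within $r/4$ of some $c_i$. The observation driving the whole argument is that if the finite sample $S$ meets each of the sets $B_i := \BB_{r/4}(c_i) \cap M$, then by a double application of the triangle inequality $S$ is $(r/2)$-dense in $M$. Hence it suffices to bound the probability that some $B_i$ receives no sample.

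The quantitative heart of the proof is a uniform volume lower bound of the form
\[
\mathrm{vol}_M(B_i) \;\geq\; \kappa_k \cdot (r/4)^k
\]
valid for every $i$, where $\kappa_k$ is a dimensional constant (with an extra factor of $\tfrac{1}{2}$ absorbed to account for boundary points). All three pieces of $\Delta_M$ enter here: the bound $r < \tau_M$ ensures that the ambient Euclidean ball of radius $r/4$ intersects $M$ in a set onto which the orthogonal projection to the tangent space is a diffeomorphism; the injectivity-type bound $r < \rho_M$ makes the exponential map from $T_{c_i}M$ a diffeomorphism of the required ball onto $B_i$, so that its volume may be computed intrinsically; and $r < \tau_{\partial M}$ guarantees that if $c_i$ lies close to $\partial M$, at least a definite fraction of the tangent-space ball is still captured by $M$. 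The dual packing argument then yields the covering-number bound $L \leq \mathrm{vol}(M) / \bigl(\kappa_k (r/8)^k\bigr)$.

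Given these two ingredients the probabilistic step is essentially automatic. The probability that a single uniform draw from $M$ lands in $B_i$ is at least $p := \kappa_k (r/4)^k / \mathrm{vol}(M)$, independently of $i$. By the union bound and the inequality $1-p \leq e^{-p}$, the probability that some $B_i$ receives no point among $\#S$ independent samples is at most $L \cdot \exp(-p\cdot \#S)$. Setting this less than $\gamma$ and solving for $\#S$ produces the explicit threshold
\[
N_M(r,\gamma) \;=\; \frac{1}{p}\,\log\!\Bigl(\frac{L}{\gamma}\Bigr),
\]
which depends only on $k$, $\mathrm{vol}(M)$, $r$, and $\gamma$ through the bounds on $p$ and $L$.

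The main obstacle is the uniform volume lower bound on $B_i$ near $\partial M$. Interior points are controlled by the classical Niyogi-Smale-Weinberger estimate, which uses only $\tau_M$ and reduces to volume comparison on a graph over the tangent plane. For boundary points the same strategy must be executed after $M$ has been truncated along $\partial M$, and this is where careful use of $\tau_{\partial M}$ (to keep the boundary from curving back inside $B_i$) and $\rho_M$ (to linearise the local geometry through the exponential map) is essential; the delicate point is to obtain a bound with a universal constant that does not degenerate as $c_i$ approaches $\partial M$. Once this local volume estimate is in place, the remainder is a routine coupon-collector computation.
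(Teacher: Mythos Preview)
The paper does not prove this statement at all: Theorem~\ref{thm:wangdense} is quoted verbatim from \cite[Theorem~3.3]{wang2}, and the only content the paper adds is the subsequent Remark recording the explicit form of the bound $N_M(r,\gamma)$. So there is no ``paper's own proof'' to compare against.

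That said, your sketch is the correct and standard mechanism, and it is visibly the one underlying the cited bound. The formula in the Remark,
\[
N_M(r,\gamma) = \beta_M(r)\cdot\left[\beta_M(r/2) + \ln(1/\gamma)\right],
\]
is exactly of the shape your coupon-collector step produces: $\beta_M(r)$ is $\mathrm{Vol}(M)$ divided by a lower bound for the $M$-volume of a radius-$r$ ambient ball (so $\beta_M(r)^{-1}$ plays the role of your $p$), while $\beta_M(r/2)$ is the corresponding covering-number bound (your $L$). The only discrepancy is cosmetic: the quoted bound uses $L + \ln(1/\gamma)$ rather than your sharper $\ln L + \ln(1/\gamma)$, which just reflects the crude estimate $\ln L \le L$. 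The geometric content you flag as the ``main obstacle'' --- a uniform lower volume bound for small metric balls that does not degenerate near $\partial M$ --- is precisely what the factors $\cos^k(\theta)/2^{k+1}$ and the regularized incomplete beta function in $\beta_M$ are recording, and it is indeed where $\tau_M$, $\tau_{\partial M}$, and $\rho_M$ all enter.

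In short: your proposal is a faithful outline of the Wang--Wang argument that the paper is citing, not an alternative to anything the paper itself does.
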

	
	\noindent In other words, we can guarantee with high confidence that every point of $M$ is no more than $r/2$ away from some point of $S$ whenever $\#S > N_M(r,\gamma)$. 
	
	\begin{remark} This bound $N_M(r,\delta)$ has the form 
		\begin{align}\label{eq:N}
		N_M(r,\gamma) = \beta_M(r) \cdot \left[\beta_M\left(\frac{r}{2}\right) + \ln\left(\frac{1}{\gamma}\right)\right],
		\end{align}
		where $\beta_M:\RR \to \RR_{> 0}$ is the function
		\[
		\beta_M(x) := \frac{\text{Vol}(M)}{\frac{\cos^k(\theta)}{2^{(k+1)}}\cdot I_{y}\left(\frac{k+1}{2},\frac{1}{2}\right) \cdot \text{Vol}(\BB_x^k)}.
		\]
		Here $\text{Vol}(\bullet)$ is standard $k$-dimensional Lebesgue volume and the auxiliary variables are 
		\[
		\theta := \arcsin\left(\frac{x}{4\Delta_M}\right) \quad \text{ and } \quad y := 1-\frac{x^2 \cdot \cos^2(\theta)}{16\Delta_M^2}; 
		\] moreover, $I_y(a,b)$ denotes the {\em regularized incomplete beta function}
		\begin{align*}
		I_y(a,b) := \frac{B_y(a,b)}{B_1(a,b)}, \quad \text{ with } \quad 
		B_y(a,b) := \int_0^y t^{a-1}(1-t)^{b-1}~dt.
		\end{align*}
		And finally, $\BB^k_x$ is the ball of radius $x$ in $k$-dimensional Euclidean space.
	\end{remark}
	
	\subsection{Setup and Parameter Choices}
	
	Let $p$ be any (not necessarily singular) point on a curve $C \subset \PP^n$. Passing to an affine chart of $\pp^n$ containing $p$, we may as well work within $\CC^{n+1} \simeq \RR^{2n+2}$. In light of this identification, all dimensions of spaces mentioned henceforth are to be understood as dimensions over $\RR$ rather than $\CC$. Consider any choice of positive radii $\epsilon \gg \delta$ and (unit length) direction $\xi$ so that the complex link of $p$ in $C$ is given by the intersection
	\begin{align}\label{eq:embclk}
	\Clk_p = C \cap \BB_\epsilon(p) \cap \pi_\xi^{-1}(\delta).
	\end{align}
	By Theorem \ref{thm:toplink}, this is a set of (finite) cardinality $\ell := \e_pC$, so we may enumerate its points as $\set{x_1,x_2,\ldots,x_\ell}$. Let $\mu > 0$ be the smallest pairwise Euclidean distance between these points
	\begin{align}\label{eq:mu}
	\mu := \min\set{\|x_i-x_j\|, \text{ where } 1 \leq i \neq j \leq \ell},
	\end{align} and let $\kappa > 0$ be the distance between $\Clk_p$ and the boundary of the closed ball $\BB_\epsilon(p)$:
	\begin{align}\label{eq:kappa}
	\kappa := \min\set{\epsilon - \|p-x_i\|, \text{ where } 1 \leq i \leq \ell}.
	\end{align}
	These new distances $\kappa$ and $\mu$ are determined by the initial choices of $\epsilon, \delta$ and $\xi$. We also select a new parameter $\epsilon_0 \in (0,\delta)$, called the {\em inner radius}. Since $\epsilon_0$ is smaller than $\delta$, the open ball $\BB_{\epsilon_0}(p)^\circ$ does not intersect the offset hyperplane $\pi_\xi^{-1}(\delta)$, and hence does not contain any of the points $\set{x_1,\ldots,x_\ell}$. 
	
	\begin{proposition}\label{prop:annmwb}
		The intersection $C' := C \cap \left[\BB_\epsilon(p) - \BB_{\epsilon_0}(p)^\circ\right]$ forms a two-dimensional manifold with boundary embedded within $C_\text{\rm reg} \subset \RR^{2n+2}$.
	\end{proposition}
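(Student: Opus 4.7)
The plan is to apply the regular value theorem to the squared-distance function $f(q) := \|q-p\|^2$, restricted to the smooth locus $C_\mathrm{reg}$, and to identify $C'$ with the preimage $(f|_{C_\mathrm{reg}})^{-1}([\epsilon_0^2,\epsilon^2])$ inside that locus.

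First I would observe that the singular locus $C_\mathrm{sing}$ is a finite subset of $C$, since $C$ is a complex algebraic curve. Possibly shrinking $\epsilon$ --- harmless because the complex-link construction in Section \ref{section:complink} only requires $\epsilon$ to be sufficiently small --- one may arrange that $p$ is the unique point of $C_\mathrm{sing}$ inside the closed ball $\overline{\BB_\epsilon(p)}$. Since $\epsilon_0 > 0$ explicitly excludes $p$ from $C'$, the inclusion $C' \subset C_\mathrm{reg}$ then holds automatically, so the interior of $C'$ is already a real $2$-dimensional smooth submanifold of $\RR^{2n+2}$.

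Next I would invoke the transversality property already packaged into the choice of $\epsilon$ (recorded in the footnote of Section \ref{section:complink}): for every radius $r \leq \epsilon$, the sphere $\partial\BB_r(p)$ meets each stratum of $C$ transversely in $\RR^{2n+2}$. Inside $\overline{\BB_\epsilon(p)}$ the only positive-dimensional stratum of $C$ is $C_\mathrm{reg}$, so both bounding spheres $\partial\BB_\epsilon(p)$ and $\partial\BB_{\epsilon_0}(p)$ meet $C_\mathrm{reg}$ transversely. Equivalently, $\epsilon^2$ and $\epsilon_0^2$ are both regular values of the smooth restricted map $f|_{C_\mathrm{reg}} : C_\mathrm{reg} \to \RR$.

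Finally, the preimage theorem for manifolds with boundary applied to the interval $[\epsilon_0^2,\epsilon^2]$ (whose endpoints are regular values of $f|_{C_\mathrm{reg}}$) shows that $(f|_{C_\mathrm{reg}})^{-1}([\epsilon_0^2,\epsilon^2])$ is a $2$-dimensional smooth manifold with boundary embedded in $C_\mathrm{reg}$, with boundary given by the two disjoint $1$-manifolds $C_\mathrm{reg} \cap \partial\BB_\epsilon(p)$ and $C_\mathrm{reg} \cap \partial\BB_{\epsilon_0}(p)$. By construction this preimage coincides with $C'$. The only step that could in principle require genuine effort is verifying transversality at the inner radius $\epsilon_0$, but since $\epsilon_0 < \delta < \epsilon$ this is handed to us directly by the standing assumption on $\epsilon$ (which itself rests on the classical conic structure of complex analytic sets near a point), so no real obstacle arises.
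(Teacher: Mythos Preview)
Your argument is correct. Both you and the paper work with the squared-distance function $f(q)=\|q-p\|^2$ restricted to $C_{\mathrm{reg}}$ and both invoke the transversality of the spheres $\partial\BB_e(p)$ to $C_{\mathrm{reg}}$ that was built into the choice of $\epsilon$ in Section~\ref{section:complink}. The difference lies in the final step: you only use that the two endpoint radii $\epsilon_0^2$ and $\epsilon^2$ are regular values and appeal to the elementary preimage theorem to conclude that $f^{-1}([\epsilon_0^2,\epsilon^2])$ is a manifold with boundary, whereas the paper uses transversality at \emph{every} intermediate radius $e\in(0,\epsilon]$ and then applies Thom's first isotopy lemma (Lemma~\ref{lem:thom}) to exhibit $C'$ as a trivial fiber bundle over $[\epsilon_0,\epsilon]$, hence diffeomorphic to a product $F\times[\epsilon_0,\epsilon]$ with $F$ a one-manifold. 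Your route is lighter and entirely adequate for the proposition as stated; the paper's route yields the stronger product structure (which, as it happens, is not used later). You were also more explicit than the paper in recording why $C'$ avoids the singular locus, which is a point the paper passes over silently.
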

	\begin{proof}
		We recall that the radius $\epsilon$ from \eqref{eq:embclk} satisfies the property that the boundary sphere $\partial\BB_e(p)$ is transverse to $C_\text{reg}$ for all $e \in (0,\epsilon]$. Thus, the smooth map ${\bf d}_p:\RR^{2n+2} \to \RR$ given by 
		\[
		x \mapsto \|x-p\|^2
		\] has no critical points on $C_\text{reg}$ valued in $(0,\epsilon]$. By Lemma \ref{lem:thom}, the restriction ${\bf d}_p:C' \to [\epsilon_0,\epsilon]$ forms a trivial fiber bundle. Since all such $e$ are regular values of ${\bf d}$, the implicit function theorem guarantees that each fiber $F_e := C_\text{reg} \cap {\bf d}_p^{-1}(e)$ is a smooth one-dimensional submanifold of $C'$.  Therefore, the desired result now follows from the fact that $C'$ is diffeomorphic to the product $F \times [\epsilon_0,\epsilon]$ where $F$ is a smooth one-dimensional manifold. 
	\end{proof}
	
	Recalling the fact that $\set{x_1,\ldots,x_\ell} \cap \BB_{\epsilon_0}(p)$ is empty since $\epsilon_0 < \delta$, we have
	\[
	C' \cap \pi_\xi^{-1}(\delta) = \set{x_1,\ldots,x_\ell}.
	\]
	An immediate side-effect of replacing $C'$ by a dense uniform sample is that none of the sample points will lie exactly on $\pi_\xi^{-1}(\delta)$. Therefore, we require a final {\em thickness} parameter $\alpha > 0$. This is a sufficiently small positive radius for which the following property holds: the set of all points in $C$ that lie within distance $\alpha$ of the offset hyperplane $\pi_\xi^{-1}(\delta)$ is entirely contained within the union of open radius-$\alpha$ balls around points of the complex link:
	\begin{align}\label{eq:alphasep}
	\set{y \in C \mid \text{dist}[y,\pi_\xi^{-1}(\delta)] < \alpha} \subset \bigcup_{i=1}^\ell \BB_\alpha(x_i).
	\end{align}
	By sufficiently small here we mean that $\alpha$ must be bounded above by the minimum
	\begin{align}\label{eq:alphabound}
	\alpha < \min\set{(\epsilon-\delta), (\delta-\epsilon_0),{\mu}/{4},\kappa,{\Delta_{C'}}/{2}}.
	\end{align}
	This inequality encodes all of the geometric constraints required for our inference result. As atonement for introducing this deluge of parameters, we remind the reader that $\epsilon$ and $\delta$ were fixed in \eqref{eq:embclk}, while $\epsilon_0$ is the inner radius used in Proposition \ref{prop:annmwb}; the quantities $\mu$ and $\kappa$ are described in \eqref{eq:mu} and \eqref{eq:kappa} respectively, and $\Delta_{C'}$ is from \eqref{eq:Dwang}. Below we have illustrated the typical local picture of $C$ near $p$ in the case where $\e_pC = 3$.
	
	\begin{center}
		\includegraphics[scale=.4]{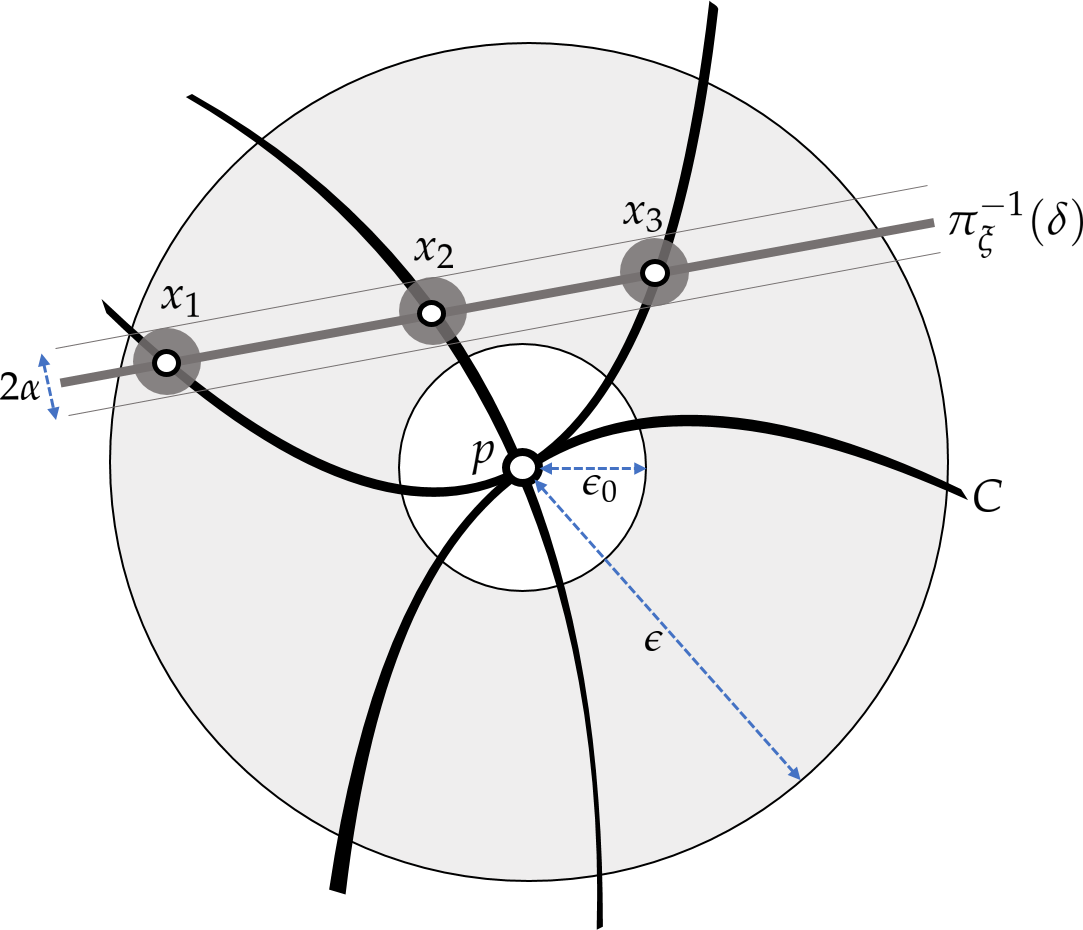}
	\end{center}
	
	\noindent The first four terms within the minimum on the right side of \eqref{eq:alphabound} are designed simply to ensure that balls of radius $\alpha$ around each of the $x_i$ are well-separated and fully contained within the annulus $[\BB_{\epsilon}(p) - \BB_{\epsilon_0}(p)^\circ]$; the final term is required for applying Theorem \ref{thm:wangdense}.
	
	\subsection{Inferring Multiplicities with High Confidence}	
	
	Here we prove Theorem {({\bf C}) from the Introduction. The parameters encountered in its statement below were chosen in the previous subsection.
		\begin{theorem}\label{thm:mainfull}
			Let $S \subset \RR^{2n+2}$ be a finite set of points sampled uniformly from the intersection $C' = C \cap [\BB_\epsilon(p)-\BB_{\epsilon_0}(p)^\circ]$. For any $\gamma \in (0,1)$, if the cardinality $\# S$ exceeds the bound $N_{C'}(\alpha,\gamma)$ from \eqref{eq:N}, then the following holds with probability exceeding $(1-\gamma)$: the set 
			\[
			S' := \set{y \in S \mid \text{\em dist}[y,\pi_\xi^{-1}(\delta)] < \alpha}
			\] consists of exactly $\ell$ nonempty point-clusters, each of diameter at most $2\alpha$, with the distance between distinct clusters exceeding $\mu - 2\alpha$.
		\end{theorem}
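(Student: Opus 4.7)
The plan is to reduce the theorem to two ingredients: Theorem~\ref{thm:wangdense} applied to the manifold with boundary $C'$ from Proposition~\ref{prop:annmwb}, and elementary metric geometry controlled by the parameter constraints in \eqref{eq:alphabound}. The strategy is to first establish that a large enough uniform sample $S$ is $(\alpha/2)$-dense in $C'$ with high probability, then to partition the truncated sample $S'$ geometrically by proximity to the points $x_1, \dots, x_\ell$ of $\Clk_p$ and verify the three claimed properties of this partition.

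First, since $\alpha < \Delta_{C'}/2$ by \eqref{eq:alphabound}, Theorem~\ref{thm:wangdense} applies with $r = \alpha$: whenever $\#S > N_{C'}(\alpha,\gamma)$, the set $S$ is $(\alpha/2)$-dense in $C'$ with probability exceeding $(1 - \gamma)$. The entire remainder of the argument is carried out conditionally on this event. The key observation next is that each $x_i \in \Clk_p$ actually lives comfortably inside $C'$: from $x_i \in \pi_\xi^{-1}(\delta)$ and $\xi$ being a unit vector we get $\|x_i - p\| \geq \delta$, while by definition of $\kappa$ we have $\|x_i - p\| \leq \epsilon - \kappa$. Combining this with the bounds $\alpha < \delta - \epsilon_0$ and $\alpha < \kappa$ from \eqref{eq:alphabound} shows that $\BB_{\alpha/2}(x_i) \subset C'$; density therefore produces a sample point $y_i \in S$ within distance $\alpha/2$ of $x_i$. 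Because $x_i$ lies on $\pi_\xi^{-1}(\delta)$, we have $\text{dist}[y_i, \pi_\xi^{-1}(\delta)] \leq \alpha/2 < \alpha$, so $y_i \in S'$.

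For the partitioning step, define $S'_i := S' \cap \BB_\alpha(x_i)$ for each $1 \leq i \leq \ell$. Any $y \in S'$ lies in $C$ within distance $\alpha$ of $\pi_\xi^{-1}(\delta)$, so the separation hypothesis \eqref{eq:alphasep} forces $y \in \BB_\alpha(x_i)$ for some $i$, giving $S' = \bigcup_i S'_i$. Because $\|x_i - x_j\| \geq \mu$ for $i \neq j$ and $\alpha < \mu/4$, the balls $\BB_\alpha(x_i)$ are pairwise disjoint, so this union is disjoint and each $S'_i$ is nonempty by the previous paragraph. The triangle inequality anchored at $x_i$ immediately yields $\text{diam}(S'_i) \leq 2\alpha$, and for $i \neq j$, $y \in S'_i$, $z \in S'_j$ the reverse triangle inequality gives $\|y - z\| \geq \|x_i - x_j\| - \|y - x_i\| - \|z - x_j\| > \mu - 2\alpha$, as claimed.

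The only step that demands real care is the bookkeeping of which constraint in \eqref{eq:alphabound} serves which purpose: $\alpha < \Delta_{C'}/2$ enables the invocation of Theorem~\ref{thm:wangdense}; $\alpha < \kappa$ and $\alpha < \delta - \epsilon_0$ keep the neighborhoods of the $x_i$ inside the annulus $C'$ so that density produces sample points there; $\alpha < \mu/4$ ensures the neighborhoods are mutually disjoint and yields the inter-cluster separation; and the separation hypothesis \eqref{eq:alphasep}, whose genericity relative to $\alpha < \epsilon - \delta$ is implicit, guarantees that no point of $S'$ escapes the union of these neighborhoods. Once these tunings are traced, no genuine obstacle remains.
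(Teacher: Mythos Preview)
Your proof is correct and follows essentially the same route as the paper's: apply Theorem~\ref{thm:wangdense} to the manifold with boundary $C'$, then use \eqref{eq:alphasep} and the triangle inequality to partition $S'$ into the $\ell$ clusters $S' \cap \BB_\alpha(x_i)$. The only cosmetic slip is the line ``$\BB_{\alpha/2}(x_i) \subset C'$'', which cannot hold literally since $C'$ is two-dimensional; what you mean (and what suffices) is that $x_i$ lies in $C'$ at distance at least $\alpha/2$ from its boundary, so $(\alpha/2)$-density yields a sample point nearby.
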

		\begin{proof}
			The intersection $C'$ is an embedded two-dimensional submanifold with boundary of $\RR^{2n+2}$ by Proposition \ref{prop:annmwb}, and $\alpha < \Delta_{C'}/2$ holds by \eqref{eq:alphabound}. Thus, we may safely apply Theorem \ref{thm:wangdense} to conclude that the set $S$ is $\alpha$-dense in $C'$ with probability exceeding $(1-\gamma)$. We will assume throughout the remainder of the argument that this density holds. 
			
			Since the inner radius $\epsilon_0$ is smaller than $\delta$, all points of the complex link $\set{x_1,\ldots,x_\ell}$ lie in $C'$. Let $B_i$ denote the open ball of radius $\alpha$ around each $x_i$. The inequalities which involve $\epsilon, \epsilon_0, \delta$ and $\kappa$ in  $\eqref{eq:alphabound}$ guarantee that  each $B_i$ is entirely contained within the annulus $[\BB_\epsilon(p)-\BB_{\epsilon_0}(p)]$. It follows from the $\alpha$-density of $S$ in $C'$ that the intersections $S_i := B_i \cap S$ are all nonempty; and moreover, the diameter of each $S_i$ is no larger than the diameter $2\alpha$ of $B_i$. We claim that these $S_i$ form the $\ell$ desired point clusters of $S'$. 
			
			To establish the claim, note from \eqref{eq:alphasep} that every point of $S'$ must lie in one of the $B_i$, whence $S' = \bigcup_{i=1}^\ell S_i$. Thus, it remains to show that the $S_i$ are separated from each other by a distance larger than $\mu-2\alpha$. To this end, note from \eqref{eq:mu} that the points $x_i$ and $x_j$ are separated by distance at least $\mu$ whenever $i \neq j$. Thus, points in distinct $B_i$ and $B_j$ are at least $\mu-2\alpha$ apart from each other, as desired.
		\end{proof}
		
		We know from \eqref{eq:alphabound} that $2\alpha$ is smaller than $\mu-2\alpha$, so the desired number $\ell = \e_pC$ can be determined with high confidence by clustering together points of $S'$ which lie within $2\alpha$ of each other and then counting the clusters.

		\bibliographystyle{abbrv}
		\bibliography{library}

	\end{document}